\DeclareMathOperator{\lin}{span}
\DeclareMathOperator{\Dom}{Dom}
\newtheorem{fakt}{Proposition}
\newtheorem{tw}{Theorem}
\newtheorem{wn}{Corollary}
\newtheorem{hipoteza}{Conjecture}
\newtheorem{thmx}{Theorem}
\newtheorem{hipotezaX}{Conjecture}
\newtheorem{wnX}{Corollary}
\theoremstyle{remark}
\newtheorem{uwaga}{Remark}
\theoremstyle{remark}
\newtheorem{przyklad}{Example}
\author{Grzegorz Świderski}
\address{
	Grzegorz Świderski\\
	Instytut Matematyczny\\
	Uniwersytet Wrocławski\\
	Pl. Grunwaldzki 2/4\\
	50-384 Wrocław\\
	Poland}
\email{gswider@math.uni.wroc.pl}
\title[Spectral properties of Jacobi matrices]{Spectral properties\\ of unbounded Jacobi matrices with \\ almost monotonic weights}
\subjclass[2010]{Primary: 47B36, 42C05. Secondary: 60J80}
\keywords{Jacobi matrix, continuous spectrum, Chihara's conjecture}
\begin{document}
   \date{}
   \maketitle

   \begin{abstract}
		We present an unified framework to identify spectra of Jacobi matrices. We give applications to long-standing conjecture of Chihara (\cite{Chihara1}, \cite{Chihara2}) concerning one-quarter class of orthogonal polynomials, to the conjecture posed by Roehner and Valent \cite{BRGV} concerning continuous spectra of generators of birth and death processes and to spectral properties of operators studied by Janas, Moszyński \cite{JM1} and Pedersen \cite{SP}.
   \end{abstract}
   
	\section{Introduction}
      Given sequences $\{a_n\}_{n=0}^\infty$ and $\{b_n\}_{n=0}^\infty$ such that $a_n > 0$ and $b_n \in \mathbb{R}$ we set
      \begin{equation*}
         C =
            \left( 
               \begin{array}{cccccc}
               b_0 & a_0 & 0   & 0   & 0   &\ldots \\
               a_0 & b_1 & a_1 & 0   & 0   & \ldots \\
               0   & a_1 & b_2 & a_2 & 0   & \ldots \\
               0   & 0   & a_2 & b_3 & a_3 & \ldots \\
               \vdots & \vdots & \vdots & \vdots & \vdots& \ddots
               \end{array} 
            \right).
      \end{equation*}
      The operator~$C$ is defined on the domain $\Dom(C) = \{ x \in \ell^2 \colon C x \in \ell^2\}$, where
      \[
         \ell^2 = \{ x \in \mathbb{C}^{\mathbb{N}} \colon \sum_{n=0}^\infty |x_n|^2 < \infty \}
      \]
      and is called a \emph{Jacobi matrix}.
   
      The study of Jacobi matrices is motivated by connections with orthogonal polynomials and classical moment problem (see e.g. \cite{BS1}). Also every self-adjoint operator can be represented as a direct sum of Jacobi matrices. In particular, generators of birth and death processes may be seen as Jacobi matrices acting on weighted $\ell^2$ spaces. 
      
      There are several approaches to the problem of the indentification of the spectrum of unbounded Jacobi matrices. A method often used is based on subordination theory (see e.g. \cite{SLC}, \cite{JN1}, \cite{MM}). Another technique uses the analysis of commutator between Jacobi matrix and a suitable chosen matrix (see e.g. \cite{JS1}). The case of Jacobi matrices with monotonic weights was considered mainly by Dombrowski (see e.g. \cite{JD1}), where the author developed commutator techniques which enabled qualitative spectral analysis of examined operators.
		
		The present article is motivated by commutator techniques of Dombrowski and some ideas of Clark~\cite{SLC}. In fact, commutators do not appear here directly but are hidden in some of our expressions.

      Let $C$ be a Jacobi matrix and assume that the matrix~$C$ is self-adjoint. The spectrum of the operator~$C$ will be denoted by $\sigma(C)$, the set of all its eigenvalues by $\sigma_p(C)$ and the set of all accumulation points of $\sigma(C)$ by $\sigma_{ess}(C)$. For a real number $x$ we define $x^- = \max(-x,0)$.
		
		Our main result is the following theorem.
		\begin{thmx} \label{twSpektrumOgolne}
         Let $C$ be a Jacobi matrix. If there is a positive sequence $\{ \alpha_n \}$ such that
         \begin{flalign}
            \tag{a} &\lim_{n \rightarrow \infty} a_n = \infty, &\\
            \tag{b} &\sum_{n=1}^\infty \left[ \frac{a_{n+1}}{a_n} \frac{\alpha_{n+1}}{\alpha_n} - \frac{a_n}{a_{n-1}} \frac{\alpha_{n-1}}{\alpha_n} \right]^- < \infty, \\
            \tag{c} &\sum_{n=1}^\infty \frac{1}{a_{n-1}} \left| \frac{a_{n-1}}{a_n} - \frac{\alpha_{n-1}}{\alpha_n} \right| < \infty, \\
            \tag{d} &\sum_{n=0}^\infty \left| \frac{b_{n+1}}{a_n} - \frac{b_n}{a_{n-1}} \frac{\alpha_{n-1}}{\alpha_n} \right| < \infty, \\
            \tag{e} &\sum_{n=0}^\infty \frac{1}{a_n \alpha_n} = \infty, \\
            \tag{f} &\lim_{n \rightarrow \infty} \frac{\alpha_{n-1}}{\alpha_n} \frac{a_n}{a_{n-1}} = 1, \\
            \tag{g} &\limsup_{n \rightarrow \infty} \frac{|b_n|}{a_n} < 2
         \end{flalign}
			then the Jacobi matrix~$C$ is self-adjoint and satisfies $\sigma_p(C) = \emptyset$, and $\sigma(C) = \mathbb{R}$.
		\end{thmx}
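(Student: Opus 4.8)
The plan is to derive all three assertions from one purely asymptotic statement: \emph{for every $\lambda\in\mathbb{R}$, every solution $u=(u_n)_{n\ge0}\ne 0$ of the second-order recurrence $a_{n-1}u_{n-1}+b_nu_n+a_nu_{n+1}=\lambda u_n$ ($n\ge1$) satisfies $\sum_n u_n^2=\infty$.} Granting this, the deductions are routine: taking $\lambda=0$ exhibits a non-$\ell^2$ solution, so the recurrence is limit point at infinity and $C$ is self-adjoint; if $\lambda\in\mathbb{R}\setminus\sigma(C)$ then $\psi:=(C-\lambda)^{-1}\delta_0$ is a nonzero $\ell^2$ vector solving the recurrence for $n\ge1$, which is impossible, so $\sigma(C)=\mathbb{R}$; and an eigenvector for $\lambda\in\sigma_p(C)$ would likewise be an $\ell^2$ solution, so $\sigma_p(C)=\emptyset$. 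Thus everything rests on the asymptotic statement.

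To prove it I would fix $\lambda$ and a nonzero solution $u$, and attach to it a quadratic ``energy''
\[
   \mathcal{F}_n \;=\; P_n u_n^2 + Q_n u_{n+1}^2 + R_n u_n u_{n+1},
\]
with $P_n,Q_n,R_n$ built from $a_n,b_n,\alpha_n,\lambda$, arranged so that after inserting the transfer relation $u_{n+1}=\frac{\lambda-b_n}{a_n}u_n-\frac{a_{n-1}}{a_n}u_{n-1}$ the coefficient of $u_n^2$ in $\mathcal{F}_{n+1}$ coincides identically with the coefficient of $u_n^2$ in $\mathcal{F}_n$; this pins down a recursion for the coefficients from which, using (f), one reads $P_n\asymp Q_n\asymp a_n\alpha_n$ and $|R_n|\lesssim a_n\alpha_n$. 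First, (g) together with (a) gives $\limsup_n|\tfrac{\lambda-b_n}{a_n}|<2$, so for large $n$ the form $\mathcal{F}_n$ is positive definite in $(u_n,u_{n+1})$ with constants independent of $n$; hence
\[
   c\,(u_n^2+u_{n+1}^2)\;\le\;\frac{\mathcal{F}_n}{a_n\alpha_n}\;\le\;C\,(u_n^2+u_{n+1}^2)
\]
for $n$ large, and in particular $\mathcal{F}_{n_0}>0$ once $n_0$ is large, since a nonzero solution cannot have two consecutive vanishing entries.

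Next I would expand $\mathcal{F}_{n+1}-\mathcal{F}_n$ with the transfer relation and regroup: the principal part cancels by the choice of coefficients, and the leftover increment can be estimated — using (g) for the positivity of $\mathcal{F}_n$ and (b), (c), (d), (f) for the various error terms, conditions (c) and (d) entering precisely because $\alpha_n$ need not be proportional to $a_n$, nor $b_n$ to $a_n$ — by $(\mathcal{F}_{n+1}-\mathcal{F}_n)^-\le\varepsilon_n\mathcal{F}_n$ with $\sum_n\varepsilon_n<\infty$. This increment is the ``commutator hidden in our expressions''. Consequently $\mathcal{F}_n\ge\mathcal{F}_{n_0}\prod_{k\ge n_0}(1-\varepsilon_k)>0$ for all $n\ge n_0$; feeding this into the comparability above yields $u_n^2+u_{n+1}^2\ge c'/(a_n\alpha_n)$, so
\[
   2\sum_{n\ge n_0}u_n^2\;\ge\;\sum_{n\ge n_0}(u_n^2+u_{n+1}^2)\;\ge\;c'\sum_{n\ge n_0}\frac{1}{a_n\alpha_n}\;=\;\infty
\]
by (e); thus $u\notin\ell^2$, as required.

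The part that genuinely requires work — and the main obstacle — is the construction of $P_n,Q_n,R_n$ and the proof that, once the principal term cancels, the negative part of $\mathcal{F}_{n+1}-\mathcal{F}_n$ is controlled by $\varepsilon_n\mathcal{F}_n$ with $\sum\varepsilon_n<\infty$, drawing on (b), (c), (d) and (f) simultaneously; the rest is bookkeeping. An equivalent and perhaps more transparent route replaces this by first passing to the weighted sequence $\widetilde u_n=u_n/\sqrt{a_n\alpha_n}$ (or a close variant), which recasts the recurrence as a perturbation — summable by (b)--(d) — of a ``free'' equation with rotation-type transfer matrices whose angles stay away from $0$ and $\pi$ by (g); a standard Prüfer/EFGP argument then shows $\widetilde u_n^2+\widetilde u_{n+1}^2$ tends to a positive limit, which gives the same conclusion.
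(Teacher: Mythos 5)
Your proposal is correct and follows essentially the same route as the paper: the quadratic form $\mathcal{F}_n$ you describe is exactly the paper's $S_n$ in \eqref{SN1}--\eqref{SN2} (with $P_n=a_n\alpha_n$, $Q_n=\alpha_{n-1}a_n^2/a_{n-1}$, $R_n=-\alpha_{n-1}a_n(\lambda-b_n)/a_{n-1}$, whose two representations realize precisely the cancellation of the $u_n^2$-coefficient you require), the two-sided bound $S_n/\widetilde S_n\asymp a_n\alpha_n$ from (f) and (g) is Proposition~\ref{faktAsymptotyka}, and the summability of the negative relative increments via (b), (c), (d) followed by the use of (e) is the paper's argument for Theorem~\ref{twSpektrumOgolne} almost verbatim. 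The reduction of self-adjointness, $\sigma_p(C)=\emptyset$ and $\sigma(C)=\mathbb{R}$ to the non-$\ell^2$ property of all generalized eigenvectors is likewise the paper's Proposition~\ref{spektrumUogolnioneWektoryWlasne}.
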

		
		The importance of Theorem~\ref{twSpektrumOgolne} lies in the fact that we have a flexibility in the choice of the sequence $\alpha_n$. Some choices of the sequence $\alpha_n$ are given in Section~\ref{sec:specialCases}. The simplest case is the following result.
      \begin{wnX} \label{twB}
         Assume
         \begin{flalign}
            \tag{a} &\lim_{n \rightarrow \infty} a_n = \infty, &\\
            \tag{b} &\sum_{n=0}^\infty \frac{1}{a_n^2} = \infty, \\
            \tag{c} &\sum_{n=0}^\infty \left[ \left( \frac{a_{n+1}}{a_n} \right)^2 - 1 \right]^- < \infty, \\
            \tag{d} &\limsup_{n \rightarrow \infty} \frac{|b_n|}{a_n} < 2, \\
            \tag{e} &\sum_{n=0}^\infty \frac{|b_{n+1} - b_n|}{a_n} < \infty.
         \end{flalign}
         Then the Jacobi matrix $C$ is self-adjoint and satisfies $\sigma_p(C) = \emptyset$ and $\sigma(C) = \mathbb{R}$.
      \end{wnX}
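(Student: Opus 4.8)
The plan is to deduce Corollary~\ref{twB} from Theorem~\ref{twSpektrumOgolne} by making the single, specific choice $\alpha_n = a_n$. With this choice every one of the hypotheses (a)--(g) of Theorem~\ref{twSpektrumOgolne} either collapses to a trivial identity or reduces \emph{verbatim} to one of the hypotheses (a)--(e) of the Corollary, so that no genuinely new estimate is required. The only ``idea'' in the proof is to recognize that $\alpha_n = a_n$ is the weight that kills the cross terms appearing in (b)--(d).

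Concretely I would go through the seven conditions term by term. Hypothesis (a) of the Theorem is literally hypothesis (a) of the Corollary. For (b) observe that $\frac{a_{n+1}}{a_n}\frac{\alpha_{n+1}}{\alpha_n} = \left(\frac{a_{n+1}}{a_n}\right)^2$ while $\frac{a_n}{a_{n-1}}\frac{\alpha_{n-1}}{\alpha_n} = \frac{a_n}{a_{n-1}}\cdot\frac{a_{n-1}}{a_n} = 1$, so the bracket in (b) equals $\left(\frac{a_{n+1}}{a_n}\right)^2 - 1$ and (b) becomes hypothesis (c) of the Corollary. In (c) of the Theorem one has $\frac{a_{n-1}}{a_n} - \frac{\alpha_{n-1}}{\alpha_n} = \frac{a_{n-1}}{a_n} - \frac{a_{n-1}}{a_n} = 0$, so (c) holds trivially; similarly (f) becomes $\lim_{n\to\infty}\frac{a_{n-1}}{a_n}\frac{a_n}{a_{n-1}} = 1$, which is automatic. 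For (d), the cancellation $\frac{b_n}{a_{n-1}}\frac{\alpha_{n-1}}{\alpha_n} = \frac{b_n}{a_{n-1}}\cdot\frac{a_{n-1}}{a_n} = \frac{b_n}{a_n}$ (this also makes the apparent $a_{-1}$ in the $n=0$ term disappear) turns the sum into $\sum_{n=0}^\infty \frac{|b_{n+1}-b_n|}{a_n}$, i.e.\ hypothesis (e) of the Corollary. Hypothesis (e) of the Theorem reads $\sum_n \frac{1}{a_n\alpha_n} = \sum_n \frac{1}{a_n^2} = \infty$, which is hypothesis (b) of the Corollary. Finally (g) of the Theorem is exactly hypothesis (d) of the Corollary.

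Once all seven conditions have been checked, Theorem~\ref{twSpektrumOgolne} applies and gives that $C$ is self-adjoint with $\sigma_p(C) = \emptyset$ and $\sigma(C) = \mathbb{R}$, which is precisely the assertion of the Corollary. Because the derivation is a direct substitution, there is essentially no obstacle to overcome; the mild points worth a second look are purely bookkeeping, namely the $n=0$ boundary term in (d) of the Theorem (harmless, as the factor $a_{n-1}$ cancels before it matters) and the observation that $\sum 1/a_n^2 = \infty$ already forces the various weaker divergence requirements once $a_n\to\infty$.
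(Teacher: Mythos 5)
Your proposal is correct and is exactly the paper's proof: the paper states Corollary~\ref{twB} again as Theorem~\ref{twSpektrumAKwadrat} and proves it in one line by applying Theorem~\ref{twSpektrumOgolne} with $\alpha_n = a_n$. Your term-by-term verification of conditions (a)--(g) under this substitution is accurate and simply makes explicit what the paper leaves to the reader.
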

      
      In \cite[Lemma 2.6]{JDSP1} it was proven that if the nonnegative sequence $a_n^2 - a_{n-1}^2$ is bounded and $b_n \equiv 0$ then the matrix~$C$ has no eigenvalues. Corollary~\ref{twB} gives additional information that in this case holds $\sigma(C) = \mathbb{R}$. Moreover, the assumptions of Corollary~\ref{twB} are weaker than the conditions of \cite[Lemma 2.6]{JDSP1}. 
      
      In Section~\ref{sec:Examples} we provide examples showing sharpness of Corollary~\ref{twB}. In particular, condition~(b) is necessary in the class of monotonic sequences~$\{ a_n \}$ and condition~(c) could not be replaced by $[(a_{n+1}/a_n)^2 - 1]^- \rightarrow 0$. Corollary~\ref{twC} shows that in general condition~(d) is necessary. Unfortunately, we do not know whether condition~(d) is implied by the rest of the assumptions. Author knows only examples satisfying assumptions of Corollary~\ref{twB} when $|b_n|/a_n \rightarrow 0$.
      
      In Section~\ref{sec:TheoremAAppl} we apply Corollary~\ref{twB} to resolve a conjecture (see \cite{BRGV}) about continuous spectra of generators of birth and death processes. We also present there applications to the following conjecture.
      
      \begin{hipotezaX}[Chihara, \cite{Chihara1}, \cite{Chihara2}] \label{hipChiharaX}
         Assume that a Jacobi matrix~$C$ is self-adjoint, $b_n \rightarrow \infty$, the smallest point $\rho$ of $\sigma_{ess}(C)$ is finite and
         \[
            \lim_{n \rightarrow \infty} \frac{a_n^2}{b_n b_{n+1}} = \frac{1}{4}.
         \]
         Then $\sigma_{ess}(C) = [\rho, \infty)$.
      \end{hipotezaX}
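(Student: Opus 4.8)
The plan is to derive Conjecture~\ref{hipChiharaX} from Corollary~\ref{twB} by means of a ``square root'' (even--odd) transformation: a one-quarter class Jacobi matrix whose diagonal tends to $+\infty$ will be realized, after a shift and a finite-rank correction, as the even part of the square of a Jacobi matrix with \emph{zero} diagonal; that auxiliary matrix will satisfy the hypotheses of Corollary~\ref{twB}, so its spectrum is $\mathbb{R}$, and squaring turns $\mathbb{R}$ into $[0,\infty)$, which after undoing the shift becomes $[\rho,\infty)$.

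First I would reduce to a tail: since $\sigma_{ess}$ is invariant under finite-rank perturbations, $\sigma_{ess}(C)=\sigma_{ess}(C_N)$, where $C_N$ is the Jacobi matrix built from $(a_n)_{n\ge N}$ and $(b_n)_{n\ge N}$. As $\sigma_{ess}(C)\subseteq[\rho,\infty)$ is immediate from the definition of $\rho$, it remains to prove $(\rho,\infty)\subseteq\sigma_{ess}(C_N)$ for some large $N$. Using that $\rho$ is the \emph{bottom} of $\sigma_{ess}(C)$, one shows $\inf\sigma(C_N)\to\rho$ as $N\to\infty$ and, crucially, that $a_n^2\le\tfrac14(b_n-\rho)(b_{n+1}-\rho)$ for all large $n$; fix $N$ so that this holds for $n\ge N$. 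Next I look for a Jacobi matrix $J$ with zero diagonal and weights $\lambda_1,\lambda_2,\dots>0$ whose square, restricted to $\lin\{e_0,e_2,e_4,\dots\}$, equals $C_N-\rho$ after identifying $e_{2k}$ with $e_k$. Matching entries forces $\lambda_{2k}^2+\lambda_{2k+1}^2=b_{N+k}-\rho$ and $\lambda_{2k+1}\lambda_{2k+2}=a_{N+k}$, which with $u_k:=\lambda_{2k}^2$ and $u_0:=0$ unwinds to the scalar recursion $u_{k+1}=a_{N+k}^2/(b_{N+k}-\rho-u_k)$. The displayed inequality guarantees by induction that $0\le u_k<\tfrac12(b_{N+k}-\rho)$, and, writing $v_k=u_k/(b_{N+k}-\rho)$, one gets $v_{k+1}=\delta_k/(1-v_k)$ with $\delta_k=a_{N+k}^2/[(b_{N+k}-\rho)(b_{N+k+1}-\rho)]\to\tfrac14$; hence $v_k\to\tfrac12$ and $\lambda_j\to\infty$.

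Now apply Corollary~\ref{twB} to $J$. Conditions (d) and (e) there are vacuous since $J$ has zero diagonal, and (a) is $\lambda_j\to\infty$, already verified. For (b), note $\lambda_j^2\asymp b_{N+\lfloor j/2\rfloor}$, so $\sum_j\lambda_j^{-2}\asymp\sum_k b_{N+k}^{-1}$; since $(b_nb_{n+1})^{-1/2}\le\tfrac12(b_n^{-1}+b_{n+1}^{-1})$ and $a_n\sim\tfrac12(b_nb_{n+1})^{1/2}$, one has $\sum_k b_{N+k}^{-1}\gtrsim\sum_k a_{N+k}^{-1}=\infty$, the divergence being forced here by $C$ having non-void essential spectrum. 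For (c), the ``even'' ratios $(\lambda_{2k+1}/\lambda_{2k})^2$ are $\ge1$ eventually because $v_k\le\tfrac12$, while the ``odd'' ratios $(\lambda_{2k+2}/\lambda_{2k+1})^2=a_{N+k}^2\big/\bigl[(b_{N+k}-\rho)(1-v_k)\bigr]^2$ tend to $b_{N+k+1}/b_{N+k}$, and summability of the negative parts follows from the regularity of $\{a_n\}$ and $\{b_n\}$ (for instance eventual monotonicity of $b_n$ together with $\sum_n|b_{n+1}-b_n|/a_n<\infty$ and control of $\delta_k-\tfrac14$). Corollary~\ref{twB} then yields that $J$ is self-adjoint, $\sigma_p(J)=\emptyset$, and $\sigma(J)=\mathbb{R}$; in particular $\sigma_{ess}(J)=\mathbb{R}$.

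Finally, since $J^2\ge0$ and $\sigma_{ess}(J^2)\supseteq\{t^2:t\in\sigma_{ess}(J)\}=[0,\infty)$, we get $\sigma_{ess}(J^2)=[0,\infty)$. Splitting $\ell^2$ into even and odd coordinates writes $J$ as an off-diagonal block operator $\left(\begin{smallmatrix}0&B^*\\ B&0\end{smallmatrix}\right)$, so $J^2=(B^*B)\oplus(BB^*)$ with $B^*B=C_N-\rho$; since $\sigma_{ess}(B^*B)\setminus\{0\}=\sigma_{ess}(BB^*)\setminus\{0\}$, every $\mu>0$ in $\sigma_{ess}(J^2)=\sigma_{ess}(B^*B)\cup\sigma_{ess}(BB^*)$ already lies in $\sigma_{ess}(B^*B)$. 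Hence $(0,\infty)\subseteq\sigma_{ess}(C_N-\rho)$, so $[\rho,\infty)\subseteq\sigma_{ess}(C_N)=\sigma_{ess}(C)$, and together with the reverse inclusion $\sigma_{ess}(C)=[\rho,\infty)$. I expect the main obstacle to be the second step, and condition (c) in the third: proving that $\rho=\inf\sigma_{ess}(C)$ really does force $a_n^2\le\tfrac14(b_n-\rho)(b_{n+1}-\rho)$ for large $n$, and that the resulting $\lambda_j$ are regular enough. This is delicate precisely because $\tfrac12$ is only a \emph{marginally} attracting fixed point of $v\mapsto\tfrac14/(1-v)$: for any shift $\rho'<\rho$ the recursion behaves well but the negative parts in (c) are not summable, so one must work at the critical value $\rho'=\rho$, where both the solvability of the recursion and condition (c) are governed by the exact rate at which $a_n^2/(b_nb_{n+1})\to\tfrac14$; handling this without extra regularity assumptions is the hard part.
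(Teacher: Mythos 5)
First, be aware that the statement you are proving is presented in the paper as a \emph{conjecture} (Conjecture~\ref{hipChiharaX}): the paper does not prove it, and only establishes it under substantial additional hypotheses (Corollary~\ref{twC}, proved as Theorem~\ref{twChihara} via exactly the mechanism you propose --- realizing a shifted tail of $C$ as the even block of the square of a zero-diagonal Jacobi matrix, Proposition~\ref{restrykcjeC}, then invoking Corollary~\ref{twB} through Theorem~\ref{twBDProcess} and a Weyl perturbation). So your overall strategy is the right one for the partial result, but the attempt to push it to the full conjecture has genuine gaps.

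Concretely: (1) the inequality $a_n^2 \le \tfrac14 (b_n-\rho)(b_{n+1}-\rho)$ for large $n$ does not follow from $\rho=\min\sigma_{ess}(C)$. What is true (Chihara's chain-sequence criterion) is that $\{a_n^2/((b_n-x)(b_{n+1}-x))\}$ is a chain sequence for suitable $x$, and chain sequences are not termwise bounded by $\tfrac14$. Worse, $\inf\sigma(C_N)$ increases to $\rho$ \emph{from below}, so $C_N-\rho$ may fail to be nonnegative for every $N$, in which case no factorization $C_N-\rho=B^*B$ with positive off-diagonal weights exists and your recursion for $u_k$ breaks down. (2) Even where the recursion $v_{k+1}=\delta_k/(1-v_k)$ is solvable, $\tfrac12$ is a parabolic fixed point of $v\mapsto\tfrac14/(1-v)$, so $\delta_k\to\tfrac14$ alone does not give $v_k\to\tfrac12$; and condition (c) of Corollary~\ref{twB} for the weights $\lambda_j$ --- summability of $[(\lambda_{j+1}/\lambda_j)^2-1]^-$ --- requires quantitative control of $\delta_k-\tfrac14$ and of the variation of $a_n$ and $b_n$ that the conjecture's hypotheses do not supply. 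You acknowledge this and import ``eventual monotonicity of $b_n$'' and $\sum_n|b_{n+1}-b_n|/a_n<\infty$, which are extra assumptions of exactly the kind Corollary~\ref{twC} adds. (3) The divergence $\sum_n 1/a_n=\infty$ needed for condition (b) is asserted to be ``forced by nonvoid essential spectrum'' without proof. In sum, your argument can deliver a variant of the paper's Corollary~\ref{twC}, not Conjecture~\ref{hipChiharaX} itself; the conjecture remains open, and the obstacles you name in your closing sentences are precisely the ones that are not overcome.
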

      
      A direct consequence of Corollary~\ref{twB} providing easy to check additional assumptions to Conjecture~\ref{hipChiharaX} is the following result.
      
		\begin{wnX} \label{twC}
         Assume
         \begin{flalign}
            \tag{a} &\lim_{n \rightarrow \infty} a_n = \infty, &\\
            \tag{b} &\sum_{n=0}^\infty \frac{1}{a_n} = \infty, \\
            \tag{c} &\sum_{n=0}^\infty \left[ \frac{a_{n+1}}{a_n} - 1 \right]^- < \infty, \\
            \tag{d} &\lim_{n \rightarrow \infty} [a_{n-1} - b_n + a_n] = M.
         \end{flalign}
         Then the Jacobi matrix $C$ satisfies $\sigma_{ess}(C) = [-M, \infty)$. Moreover, if $a_{n+1}/a_n \rightarrow 1$ then
         \[
            \lim_{n \rightarrow \infty} \frac{a_n^2}{b_n b_{n+1}} = \frac{1}{4}.
         \]
		\end{wnX}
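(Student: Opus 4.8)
The plan is to obtain this from Corollary~\ref{twB} by two standard reductions. Put $a_{-1}=0$ and let $\hat C$ denote the Jacobi matrix with off-diagonal entries $\hat a_n=a_n$ and diagonal $\hat b_n=a_{n-1}+a_n$. Condition~(b) is Carleman's condition, so both $C$ and $\hat C$ are self-adjoint. Since $\hat C$ and $C+MI$ have the same off-diagonal part and their diagonals differ by $b_n+M-a_{n-1}-a_n$, which tends to $0$ by~(d), the operator $(C+MI)-\hat C$ is a compact diagonal operator; by stability of the essential spectrum under compact self-adjoint perturbations, $\sigma_{ess}(C)+M=\sigma_{ess}(C+MI)=\sigma_{ess}(\hat C)$. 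Hence it suffices to prove $\sigma_{ess}(\hat C)=[0,\infty)$.

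The second reduction is the classical ``squaring'' device. Let $\Lambda=\operatorname{diag}(\sqrt{a_n})$, let $S$ be the right shift on $\ell^2$, and set $T=(I+S)\Lambda$. A direct computation shows that $TT^*=(I+S)\Lambda^2(I+S^*)$ is exactly $\hat C$, while $T^*T=\Lambda(2I+S+S^*)\Lambda$ is the Jacobi matrix $C'$ with diagonal $2a_n$ and off-diagonal entries $\sqrt{a_na_{n+1}}$. Consider the self-adjoint block operator $\mathcal J$ on $\ell^2\oplus\ell^2$ with vanishing diagonal blocks and off-diagonal blocks $T$ and $T^*$; after the obvious interlacing of the two standard bases, $\mathcal J$ is the Jacobi matrix with zero diagonal and off-diagonal weights $\sqrt{a_0},\sqrt{a_0},\sqrt{a_1},\sqrt{a_1},\dots$ (the $n$-th weight being $\sqrt{a_{\lfloor n/2\rfloor}}$), and $\mathcal J^2=C'\oplus\hat C$. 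I would then verify that the data of $\mathcal J$ meet the hypotheses of Corollary~\ref{twB}: (a) holds because $a_n\to\infty$; (b) becomes $2\sum_n 1/a_n=\infty$, which is condition~(b) above; (c) reduces to $\sum_n[a_{n+1}/a_n-1]^-<\infty$, which is condition~(c); and (d), (e) are trivial since the diagonal vanishes. Corollary~\ref{twB} then gives that $\mathcal J$ is self-adjoint, $\sigma_p(\mathcal J)=\emptyset$, and $\sigma(\mathcal J)=\mathbb R$.

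To conclude, the spectral mapping theorem yields $\sigma(\mathcal J^2)=\{t^2:t\in\sigma(\mathcal J)\}=[0,\infty)$, so $\sigma(\hat C)\cup\sigma(C')=[0,\infty)$. Since $0\notin\sigma_p(\mathcal J)$ we have $\{0\}=\ker\mathcal J=\ker T\oplus\ker T^*$, so $T$ is injective with dense range (the latter because $\overline{\operatorname{ran}T}=(\ker T^*)^\perp$); hence the partial isometry in the polar decomposition of $T$ is a unitary, and $TT^*=\hat C$ is unitarily equivalent to $T^*T=C'$. In particular $\sigma(\hat C)=\sigma(C')$, which forces $\sigma(\hat C)=[0,\infty)$, hence $\sigma_{ess}(\hat C)=[0,\infty)$ and, by the first reduction, $\sigma_{ess}(C)=[-M,\infty)$. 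For the last assertion, (d) together with $a_n\to\infty$ gives $b_n/a_n=1+a_{n-1}/a_n+o(1)$; if $a_{n+1}/a_n\to1$ then also $a_{n-1}/a_n\to1$, so $b_n/a_n\to2$ and $b_{n+1}/a_{n+1}\to2$, and combining this with $a_{n+1}/a_n\to1$ yields $a_n^2/(b_nb_{n+1})=\bigl((b_n/a_n)(b_{n+1}/a_{n+1})(a_{n+1}/a_n)\bigr)^{-1}\to1/4$.

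The only genuinely delicate point is pinning $\sigma_{ess}(\hat C)$ down as all of $[0,\infty)$ rather than some proper closed subset; this is precisely what the auxiliary matrix $\mathcal J$, Corollary~\ref{twB}, and the isospectrality $\sigma(TT^*)=\sigma(T^*T)$ are there to supply. The remaining ingredients---Carleman's criterion, the compact diagonal correction, checking the hypotheses of Corollary~\ref{twB} for $\mathcal J$, and the elementary asymptotics in the ``moreover'' part---are routine.
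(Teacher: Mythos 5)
Your argument is correct, and its skeleton coincides with the paper's: the same compact diagonal perturbation reduces the problem to the exact case $\hat b_n=a_{n-1}+a_n$, and both proofs ultimately apply Corollary~\ref{twB} to the very same zero-diagonal Jacobi matrix with interlaced weights $\sqrt{a_0},\sqrt{a_0},\sqrt{a_1},\sqrt{a_1},\dots$ (your $\mathcal J$ is exactly the matrix $\widetilde C$ built in the proof of Theorem~\ref{twBDProcess} with $\mu_n=a_{n-1}$, $\lambda_n=a_n$, $\mu_0=0$). Where you genuinely diverge is in how the squaring step is organized. The paper routes through Proposition~\ref{odbicieSpektrum} (sign reflection), Theorem~\ref{twBDProcess} (the birth--death normal form) and Proposition~\ref{restrykcjeC} (even/odd restrictions of $\widetilde C\cdot\widetilde C$, with the spectral identities $\sigma(C_e)=\sigma(C_o)=\sigma(\widetilde C)^2$ imported from \cite{JDSP2}). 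You instead obtain the same information self-containedly from the factorization $\hat C=TT^*$ with $T=(I+S)\Lambda$, the block identity $\mathcal J^2=TT^*\oplus T^*T$, and the unitary equivalence of $TT^*$ and $T^*T$ furnished by the polar decomposition once $\ker T=\ker T^*=\{0\}$, which you correctly extract from $0\notin\sigma_p(\mathcal J)$. Your approach makes transparent why the two halves of $\sigma(\mathcal J^2)$ coincide and avoids the external reference; the paper's route yields Theorem~\ref{twBDProcess} as a stand-alone statement about birth--death generators along the way. Your verification of the hypotheses of Corollary~\ref{twB} for $\mathcal J$ and the computation in the ``moreover'' part are both correct.

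Two points deserve to be made explicit, though neither is a gap. First, $T=(I+S)\Lambda$ is unbounded, so the polar-decomposition step needs $T$ to be closed and $\mathcal J=\left(\begin{smallmatrix}0&T\\T^*&0\end{smallmatrix}\right)$ to hold with $T^*$ the genuine Hilbert-space adjoint; this can be read off from the self-adjointness of $\mathcal J$ supplied by Corollary~\ref{twB}, but it is not automatic from the formal matrix product. Second, identifying the block $TT^*$ of $\mathcal J^2$ with the \emph{maximal-domain} Jacobi operator $\hat C$ is a statement about self-adjoint operators, not merely about actions on finitely supported vectors; it holds because $\hat C$ is essentially self-adjoint there (Carleman, via your condition~(b)), so the self-adjoint restriction of $\mathcal J^2$ must agree with it. The paper's Proposition~\ref{restrykcjeC} is where the analogous care is taken.
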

      
      Let us present ideas behind the proof of Theorem~\ref{twSpektrumOgolne}. Let the difference operator $J$ be defined by 
      \[
         (J x)_n = -i \alpha_{n-1} x_{n-1} + i \alpha_n x_{n+1} \quad (n \geq 0)
      \]
      for a positive sequence $\{ \alpha_n \}_{n=0}^\infty$ and $\alpha_{-1}=x_{-1}=0$. Then we define commutator $K$ on finite sequences by the formula
      \[
         -2iK = C J - J C.
      \]
      The expression $S_n = \langle K(p^n), p^n \rangle$, where $p^n = (p_0, p_1, \ldots, p_n, 0, 0, \ldots)$, $\{p_k\}$ is the formal eigenvector of $C$ and $\langle \cdot, \cdot \rangle$ is the scalar product on $\ell^2$, proved to be an useful tool to show that the matrix~$C$ has continuous spectrum (see e.g. \cite{JD1}, \cite{JDSP1}, \cite{JDSP3}).
      
      Important observation is that we can give closed form for $S_n$ (see \eqref{SN1}). To the author's knowledge this closed form has been known only for $\alpha_n = a_n$ (see \cite{JD3}). Related expression for $\alpha_n \equiv 1$ was analysed in \cite{SLC}. Adaptation of techniques from \cite{SLC} allow us to circumvent technical difficulties present in Dombrowski's approach. Extending definition of $S_n$ to generalized eigenvectors (see \eqref{defUWektorWlasny}) enable us to show that $\sigma(C) = \mathbb{R}$.
      
		The article is organized as follows: in Section~\ref{sec:Tools} we present definitions and well-known facts important for our argument. In Section~\ref{sec:TheoremA} we prove Theorem~\ref{twSpektrumOgolne}, whereas in Section~\ref{sec:specialCases} we show its variants. In particular, we identify spectra of operators considered in \cite{SP} and  \cite{JM1}. In Section~\ref{sec:TheoremAAppl} we present applications of Corollary~\ref{twB} to some open problems. Finally, in the last section we discuss the necessity of the assumptions of Corollary~\ref{twB}. We present also examples showing that in some cases Corollary~\ref{twB} is stronger than results known in the literature.
      
      \subsection*{Acknowledgments}
         The author would like to thank Ryszard Szwarc and Bartosz Trojan for their helpful suggestions concerning the presentation of this article.
	
   \section{Tools} \label{sec:Tools}
      Given a Jacobi matrix~$C$, $\lambda \in \mathbb{R}$ and real numbers $(a,b) \neq (0,0)$ we introduce a generalized eigenvector $\{ u_n \}$ by asking
		\begin{equation} \label{defUWektorWlasny}
         \begin{gathered}
            u_0 = a, \quad u_1 = b, \\
            a_n u_{n+1} = (\lambda - b_n) u_n - a_{n-1} u_{n-1} \quad (n \geq 1).
         \end{gathered}
		\end{equation}
		Furthermore we define the sequence of polynomials
      \begin{equation} \label{defWielomianyOrtogonalne}
         \begin{gathered}
            p_{-1}(\lambda) = 0, \quad p_0(\lambda) = 1, \\
            a_n p_{n+1}(\lambda) = (\lambda - b_n) p_n(\lambda) - a_{n-1} p_{n-1}(\lambda) \quad (n \geq 0).
         \end{gathered}
      \end{equation}
      The sequence $\{ p_n(\lambda) \}$ is a formal eigenvector of matrix $C$ associated with an eigenvalue $\lambda$. 
      
      Observe that $\{ p_n(\cdot) \}_{n=0}^\infty$ is a sequence of polynomials. Moreover, the sequence is orthonormal with respect to the measure $\mu(\cdot) = \langle E(\cdot) \delta_0, \delta_0 \rangle$, where $E$ is the spectral resolution of the matrix~$C$, $\langle \cdot, \cdot \rangle $ is the scalar product on $\ell^2$ and $\delta_0 = (1, 0, 0, \ldots)$.
      
		The following propositions are well-known. We include them for the sake of completeness.
		
		\begin{fakt} \label{spektrumUogolnioneWektoryWlasne}
			Let $\lambda \in \mathbb{R}$. If every generalized eigenvector $\{ u_n \}$ does not belong to $\ell^2$ then the matrix $C$ is self-adjoint, $\lambda \notin \sigma_p(C)$ and $\lambda \in \sigma(C)$.
		\end{fakt}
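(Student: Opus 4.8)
The plan is to derive all three conclusions from the classical limit-point/limit-circle theory together with two elementary observations about which sequences qualify as generalized eigenvectors. First I would restate the hypothesis: the recurrence in \eqref{defUWektorWlasny} has a two-dimensional solution space, and a nonzero solution is a generalized eigenvector precisely when it is not identically zero (the condition $(a,b)\neq(0,0)$ merely excludes the trivial solution). So the assumption says that, for the given real $\lambda$, no nonzero solution of the recurrence lies in $\ell^2$. In Weyl's dichotomy this excludes the limit-circle case, since limit circle would force \emph{every} solution of the recurrence, at \emph{every} spectral parameter — real values included — to be square-summable. Hence $C$ is in the limit-point case, and by the standard theory of Jacobi matrices (equivalently, determinacy of the associated moment problem) the operator on $\Dom(C)$ is self-adjoint; for this I would cite a standard reference rather than reprove it.

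Next, $\lambda \notin \sigma_p(C)$. If $x \in \ell^2 \setminus \{0\}$ satisfied $C x = \lambda x$, then reading off coordinates $n \geq 1$ shows that $x$ obeys the recurrence in \eqref{defUWektorWlasny}, while coordinate $0$ gives $b_0 x_0 + a_0 x_1 = \lambda x_0$; in particular $(x_0, x_1) \neq (0,0)$, for otherwise the recurrence would propagate $x \equiv 0$. Thus $x$ would be a generalized eigenvector lying in $\ell^2$, contradicting the hypothesis.

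Finally, $\lambda \in \sigma(C)$. Suppose not; since $C$ is now known to be self-adjoint, $C - \lambda$ is a bijection of $\Dom(C)$ onto $\ell^2$ with bounded inverse. Set $\psi = (C - \lambda)^{-1}\delta_0$, so $\psi \in \ell^2$ and $\psi \neq 0$. Then $\big((C-\lambda)\psi\big)_n = 0$ for every $n \geq 1$, which is exactly the three-term recurrence in \eqref{defUWektorWlasny} for the sequence $\{\psi_n\}$; and $(\psi_0, \psi_1) \neq (0,0)$, since otherwise the recurrence gives $\psi \equiv 0$. Hence $\psi$ is an $\ell^2$ generalized eigenvector, a contradiction, and therefore $\lambda \in \sigma(C)$.

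The only step that is not purely routine is the passage to self-adjointness: once that is secured, both remaining assertions follow immediately from the single observation that an $\ell^2$ eigenvector and the vector $(C-\lambda)^{-1}\delta_0$ are each generalized eigenvectors in the sense of \eqref{defUWektorWlasny}. The point requiring the most care is the justification that "no $\ell^2$ solution at one real $\lambda$" already forces the limit-point case — this is the invariance of the deficiency index under change of spectral parameter, and is where the cited standard result does the actual work.
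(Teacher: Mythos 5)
Your proposal is correct and follows essentially the same route as the paper: self-adjointness via the limit-point criterion (the paper cites \cite[Theorem~3]{BS1} for exactly the fact that one non-$\ell^2$ generalized eigenvector suffices), $\lambda \notin \sigma_p(C)$ by observing that an $\ell^2$ eigenvector would be an $\ell^2$ generalized eigenvector, and $\lambda \in \sigma(C)$ by noting that any preimage of $\delta_0$ under $C-\lambda I$ satisfies the generalized eigenvector recurrence and hence cannot lie in $\ell^2$. The only cosmetic differences are that the paper phrases the point-spectrum step through the orthonormal polynomials $\{p_n(\lambda)\}$ and the surjectivity step directly rather than by contradiction.
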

		\begin{proof}
			\cite[Theorem~3]{BS1} asserts that $C$ is self-adjoint provided that at least one generalized eigenvector $\{u_n\} \notin \ell^2$. Direct computation shows that $\lambda \in \sigma_p(C)$ if and only if $\{ p_n(\lambda) \} \in \ell^2$. Therefore the matrix $C$ is self-adjoint and $\lambda \notin \sigma_p(C)$.
		
			Observe that the vector $x$ such that $(C - \lambda I) x = \delta_0$ satisfies the following recurrence relation
			\[
            \begin{gathered}
               b_0 x_0 + a_0 x_1 = \lambda x_0 + 1, \\
               a_{n-1} x_{n-1} + b_n x_n + a_n x_{n+1} = \lambda x_n \quad (n \ge 1).
            \end{gathered}
         \]
			Hence $x$ is a generalized eigenvector, thus $x \notin \ell^2$. Therefore the operator $C - \lambda I$ is not surjective, i.e. $\lambda \in \sigma(C)$.
		\end{proof}

      \begin{fakt} \label{odbicieSpektrum}
         Let $C$ and $\widehat{C}$ be Jacobi matrices defined by sequences $\{a_n\}$, $\{b_n\}$ and $\{a_n\}$, $\{-b_n\}$ respectively. Then
         \[
            \sigma(C) = -\sigma(\widehat{C}), \quad \sigma_p(C) = -\sigma_p(\widehat{C}),
         \]
      \end{fakt}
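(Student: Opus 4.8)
The plan is to exhibit a unitary equivalence between $C$ and $-\widehat{C}$ via the standard $(-1)^n$ diagonal gauge transformation. Let $U \colon \ell^2 \to \ell^2$ be the diagonal operator determined by $U \delta_n = (-1)^n \delta_n$ for $n \geq 0$, where $\delta_n$ is the $n$-th element of the standard basis. Then $U$ is unitary, self-adjoint and $U = U^{-1}$. First I would compute the action of $U \widehat{C} U$ on finitely supported sequences directly from the tridiagonal form of $\widehat{C}$: conjugation by $U$ multiplies the entry of $\widehat{C}$ in position $(m,n)$ by $(-1)^{m+n}$, so the diagonal entries $-b_n$ (with $m=n$) are left unchanged, while the sub- and super-diagonal entries $a_n$ (with $|m-n|=1$) are multiplied by $-1$. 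Hence, as formal matrices, $U \widehat{C} U = -C$.

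Next I would promote this to an identity of (closed, densely defined) operators together with the corresponding identification of domains. Since $U$ is unitary and leaves $\ell^2$ invariant, for $x \in \ell^2$ the vector $\widehat{C} x$ lies in $\ell^2$ if and only if $U \widehat{C} x = U \widehat{C} U (U x)$ lies in $\ell^2$; because $U \widehat{C} U$ agrees with the formal matrix $-C$, this says precisely that $x \in \Dom(\widehat{C})$ if and only if $U x \in \Dom(C)$, and in that case $U \widehat{C} x = -C (U x)$. Therefore $U \widehat{C} U = -C$ as operators, i.e.\ $C$ and $-\widehat{C}$ are unitarily equivalent.

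Finally, unitary equivalence preserves self-adjointness, the spectrum and the point spectrum, and both $\sigma(\cdot)$ and $\sigma_p(\cdot)$ satisfy $\sigma(-T) = -\sigma(T)$, $\sigma_p(-T) = -\sigma_p(T)$. Combining these yields $\sigma(C) = \sigma(-\widehat{C}) = -\sigma(\widehat{C})$ and $\sigma_p(C) = \sigma_p(-\widehat{C}) = -\sigma_p(\widehat{C})$, as claimed. The only step needing a little attention is the domain bookkeeping in the second paragraph; the rest is routine, so I do not anticipate a genuine obstacle. (An alternative, equally elementary route avoids operator-theoretic language entirely: if $\{u_n\}$ is a generalized eigenvector of $C$ at $\lambda$ in the sense of \eqref{defUWektorWlasny}, then $\{(-1)^n u_n\}$ is a generalized eigenvector of $\widehat{C}$ at $-\lambda$, and this substitution preserves membership in $\ell^2$; one then invokes Proposition~\ref{spektrumUogolnioneWektoryWlasne} and the characterization of $\sigma_p$ in terms of $\{p_n(\lambda)\} \in \ell^2$.)
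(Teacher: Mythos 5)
Your proof is correct and follows essentially the same route as the paper: conjugation by the diagonal unitary $U$ with entries $(-1)^n$ gives $U\widehat{C}U = -C$ (equivalently $UCU^{-1} = -\widehat{C}$), with the domain identification you spell out, and the spectral conclusions follow from unitary equivalence. The paper's proof is just a terser version of the same argument.
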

      \begin{proof}
         Let $U$ be the diagonal matrix with a sequence $\{ (-1)^n \}_{n=0}^\infty$ on the main diagonal. From the identity 
         \[
            U C U^{-1} = - \widehat{C}
         \]
         and equality of domains the conclusion follows.
      \end{proof}
      
      \begin{fakt} \label{restrykcjeC}
         Let $C$ be a self-adjoint Jacobi matrix associated with the sequence $b_n \equiv 0$. Let $C_e$ and $C_o$ be restrictions of $C \cdot C$ to the subspaces $\lin\{\delta_{2k} \colon k \in \mathbb{N}\}$ and $\lin\{\delta_{2k+1} \colon k \in \mathbb{N}\}$ respectively. Then $C_e$ and $C_o$ are Jacobi matrices associated with
         \begin{equation} \label{wzoryNaC2}
            \begin{gathered}
               a_n^e = a_{2n} a_{2n+1}, \quad b_n^e = a_{2n-1}^2 + a_{2n}^2\\
               a_n^o = a_{2n+1} a_{2n+2}, \quad b_n^o = a_{2n}^2 + a_{2n+1}^2.
            \end{gathered}
         \end{equation}
         respectively. Moreover, $C_o$ and $C_e$ are self-adjoint and 
         \[
            \sigma(C_o) = \sigma(C_e) = \left( \sigma(C) \right)^2, \quad \sigma_p(C_o) = \sigma_p(C_e) = \left( \sigma_p(C) \right)^2,
         \]
         when $0 \notin \sigma_p(C)$ and $0 \notin \sigma_p(\widetilde{C})$, where $\widetilde{C}$ is a self-adjoint Jacobi matrix associated with the sequences $\{a_{n+1}\}_{n=0}^\infty$ and $\widetilde{b}_n \equiv 0$, and for a set $X$ we define $X^2 = \{ x^2 \colon x \in X \}$.
      \end{fakt}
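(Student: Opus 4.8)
The plan is to compute the matrix of $C\cdot C$ explicitly, recognise the even- and odd-index coordinate subspaces as reducing subspaces of it, and then transport spectral information between $C$ and its two halves via the polar decomposition and the spectral mapping theorem.

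Since $b_n\equiv 0$, the matrix $C$ has entries $a_n$ in positions $(n,n+1)$ and $(n+1,n)$ and zero elsewhere; multiplying out gives, with $a_{-1}:=0$,
\[
   (C^2)_{n,n}=a_{n-1}^2+a_n^2,\qquad (C^2)_{n,n+2}=(C^2)_{n+2,n}=a_na_{n+1},
\]
and all remaining entries equal to $0$. In particular the matrix of $C^2$ connects indices only within the same parity class, so the decomposition $\ell^2=\lin\{\delta_{2k}\colon k\in\mathbb N\}\oplus\lin\{\delta_{2k+1}\colon k\in\mathbb N\}$ exhibits two invariant subspaces; reading off the two diagonal blocks, after the relabelling $\delta_{2k}\mapsto\delta_k$ (resp.\ $\delta_{2k+1}\mapsto\delta_k$), produces exactly the tridiagonal patterns \eqref{wzoryNaC2}. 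This gives the first assertion.

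For self-adjointness and the direct sum decomposition, let $U$ be the diagonal unitary with $(-1)^n$ on the diagonal; as in Proposition~\ref{odbicieSpektrum} (applied with $\widehat C=C$, legitimate because $b_n\equiv 0$) one has $UCU^{-1}=-C$, whence $UC^2U^{-1}=C^2$. Thus $C^2$ commutes with $U$, hence with the orthogonal projections onto the even and odd subspaces, and $C^2$ is self-adjoint as the square of the self-adjoint operator $C$; consequently $C_e$ and $C_o$ are self-adjoint and $C^2=C_e\oplus C_o$. It remains to check that $C_e$, $C_o$ really are the Jacobi matrices of \eqref{wzoryNaC2} in the paper's convention, i.e.\ that they coincide with the maximal Jacobi operators for those coefficients. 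Each of $C_e$, $C_o$ is contained in the corresponding maximal Jacobi operator and is self-adjoint, hence is a self-adjoint extension of the minimal one, so it suffices to know that the two minimal operators are essentially self-adjoint. This is where the hypotheses enter: running the construction on $\widetilde C$ instead of $C$ returns the odd half $C_o$ as its \emph{even} half (the coefficients $a_n^o,b_n^o$ are built from $\{a_{n+1}\}$ exactly as $a_n^e,b_n^e$ are built from $\{a_n\}$), so what is needed is essential self-adjointness of the even minimal operator for $C$ and for $\widetilde C$, and these are guaranteed by $0\notin\sigma_p(C)$ and $0\notin\sigma_p(\widetilde C)$ respectively. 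I expect this identification step to be the main obstacle.

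Finally, for the spectra: by the spectral mapping theorem $\sigma(C^2)=\{\lambda^2\colon\lambda\in\sigma(C)\}=(\sigma(C))^2$ (the right-hand side is closed, being the image of the closed set $\sigma(C)$ under $t\mapsto t^2$), and factoring $C^2-\lambda^2=(C-\lambda)(C+\lambda)$ together with $\sigma_p(C)=-\sigma_p(C)$ from Proposition~\ref{odbicieSpektrum} gives $\sigma_p(C^2)=(\sigma_p(C))^2$. Since $C^2=C_e\oplus C_o$, this yields $\sigma(C_e)\cup\sigma(C_o)=(\sigma(C))^2$ and $\sigma_p(C_e)\cup\sigma_p(C_o)=(\sigma_p(C))^2$. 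To see that the two halves share the same spectrum, write $C=\left(\begin{smallmatrix}0&B^*\\ B&0\end{smallmatrix}\right)$ relative to the even/odd splitting, where $B$ is the closed densely defined operator from $\lin\{\delta_{2k}\}$ to $\lin\{\delta_{2k+1}\}$ induced by $C$; then $C_e=B^*B$, $C_o=BB^*$, and $\ker C=\ker B\oplus\ker B^*$, so $0\notin\sigma_p(C)$ forces $\ker B=\ker B^*=\{0\}$. Hence the partial isometry in the polar decomposition of $B$ is a unitary from $\lin\{\delta_{2k}\}$ onto $\lin\{\delta_{2k+1}\}$ conjugating $C_e=B^*B$ onto $C_o=BB^*$, so $\sigma(C_e)=\sigma(C_o)$ and $\sigma_p(C_e)=\sigma_p(C_o)$. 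Combining with the two union equalities gives $\sigma(C_e)=\sigma(C_o)=(\sigma(C))^2$ and $\sigma_p(C_e)=\sigma_p(C_o)=(\sigma_p(C))^2$, as claimed.
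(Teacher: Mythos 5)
Your computation of $C^2$, the reduction by the even/odd decomposition, and your derivation of the spectral identities are sound, and the latter is a genuinely different route from the paper's: you obtain $\sigma(C^2)=(\sigma(C))^2$ and $\sigma_p(C^2)=(\sigma_p(C))^2$ from the spectral mapping theorem and the factorization $C^2-\lambda^2=(C-\lambda)(C+\lambda)$, and then distribute these sets between the two halves by writing $C$ in off-diagonal block form with $C_e=B^*B$, $C_o=BB^*$ and invoking the polar decomposition of $B$ (a commutation/supersymmetry argument). The paper instead deduces all spectral conclusions from the polynomial relation $p_{2n}(x)=p_n^e(x^2)$ and a reference to \cite{JDSP2}; your version is more self-contained on this point, and correctly isolates where $0\notin\sigma_p(C)$ is used to make the intertwining isometry unitary.

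The gap is exactly where you suspect it. You assert, but do not prove, that $0\notin\sigma_p(C)$ (resp.\ $0\notin\sigma_p(\widetilde C)$) forces essential self-adjointness of the minimal Jacobi operator with coefficients $a_n^e,b_n^e$ (resp.\ $a_n^o,b_n^o$); this is precisely what you need to conclude that the self-adjoint restriction of $C\cdot C$ coincides with the \emph{maximal} Jacobi operator of \eqref{wzoryNaC2}, which is the paper's convention for ``Jacobi matrix''. Without it, the even block could be in the limit-circle case, in which case the restriction would be one of many self-adjoint extensions of the minimal operator while the maximal operator is not self-adjoint at all, and the identification would fail. The missing argument is the paper's key step: the orthonormal polynomials satisfy $p_{2n}(x)=p_n^e(x^2)$ and $p_{2n+1}(0)=0$, hence
\[
   \sum_{n=0}^\infty \bigl(p_n^e(0)\bigr)^2=\sum_{n=0}^\infty p_{2n}^2(0)=\sum_{n=0}^\infty p_n^2(0),
\]
and the last sum diverges because $C$ is self-adjoint and $0\notin\sigma_p(C)$ (for a self-adjoint Jacobi matrix, $\lambda\in\sigma_p(C)$ iff $\{p_n(\lambda)\}\in\ell^2$). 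Divergence of $\sum_n (p_n^e(0))^2$ at the real point $0$ puts the even block in the limit-point case by the criterion of \cite{BS1}, and applying the same to $\widetilde C$ via $C_o=\widetilde C_e$ handles the odd block. Supply this and your proof is complete.
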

      \begin{proof}
         By direct computation it may be proved that $C_o$ and $C_e$ satisfies \eqref{wzoryNaC2}.

         Let $\{p_n^e\}$ be the sequence of associated polynomials to the matrix $C_e$. Then \cite[Theorem~3]{BS1} asserts that $C_e$ is self-adjoint provided $\{p_n^e(0)\} \notin \ell^2$. It is known that $p_{2n}(x) = p_n^e(x^2)$ (see e.g. \cite[Section~4]{JDSP2}). Since $p_{2k+1}(0)=0$ and $0 \notin \sigma_p(C)$ we have
         \[
            \infty = \sum_{n=0}^\infty p_n^2(0) = \sum_{n = 0}^\infty p^2_{2n}(0) = \sum_{n=0}^\infty \left( p_n^e(0) \right)^2.
         \]
         Therefore $C_e$ is self-adjoint.
         
         Assume that $0 \notin \sigma_p(\widetilde{C})$. Observe that $C_o = \widetilde{C}_e$. Therefore the previous argument applied to $\widetilde{C}$ implies also that $C_o$ is self-adjoint.
         
         The conclusion of spectra follows from e.g. \cite[Section~4]{JDSP2}.
      \end{proof}
		
	\section{Proof of the main theorem} \label{sec:TheoremA}
		Given a generalized eigenvector $\{ u_n \}$ and a positive sequence $\{ \alpha_n \}$ we set
		\begin{align} \label{SN1}
			S_n = a_{n-1} \alpha_{n-1} u_{n-1}^2 + a_n \alpha_n u_n^2 - (\lambda - b_n) \alpha_{n-1} u_{n-1} u_n \quad (n \geq 1).
		\end{align}
		Using the identity $a_{n-1} u_{n-1} = (\lambda - b_n)u_n - a_n u_{n+1}$ we get an equivalent formula
		\begin{align} \label{SN2}
			S_n = \frac{\alpha_{n-1}}{a_{n-1}} a_n^2 u_{n+1}^2 + a_n \alpha_n u_n^2 - \frac{\alpha_{n-1}}{a_{n-1}} a_n (\lambda - b_n) u_{n+1} u_n \quad (n \geq 1).
		\end{align}
		
      The sequence $S_n$ for $\alpha_n = a_n$ was previously used in the study of Jacobi matrices, but only in the case of \emph{bounded} ones (see e.g. \cite{JD3}, \cite{DN}). In the case of unbounded operators a sequence similar to $S_n$ for $\alpha_n \equiv 1$ was also used in \cite{SLC}.
		
		The following proposition is an adaptation of \cite[Lemma 3.1]{SLC}.
		\begin{fakt} \label{faktAsymptotyka}
			Let $\{ u_n \}$ be a generalized eigenvector associated with $\lambda \in \mathbb{R}$ and
			\begin{align*}
				\widetilde{S}_n &= u_{n+1}^2 + u_n^2.
			\end{align*}
         Assume that $a_n \rightarrow \infty$, and
			\begin{align*}
				\lim_{n \rightarrow \infty} \frac{\alpha_{n-1}}{\alpha_n} \frac{a_n}{a_{n-1}} = 1, \quad \limsup_{n \rightarrow \infty} \frac{|b_n|}{a_n} < 2.
			\end{align*}
			Then there exist constants $c_1>0, c_2>0$ such that for sufficiently large $n$
			\[
				c_1 a_n \alpha_n \leq \frac{S_n}{\widetilde{S}_n} \leq c_2 a_n \alpha_n.
			\]
		\end{fakt}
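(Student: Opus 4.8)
The plan is to regard $S_n$ as a quadratic form in the consecutive values $(u_n,u_{n+1})$ of the generalized eigenvector and to show that, after normalising by $a_n\alpha_n$, this form is uniformly comparable to $\widetilde S_n = u_n^2+u_{n+1}^2$. Starting from the equivalent expression \eqref{SN2} and dividing by $a_n\alpha_n>0$, one obtains
\[
   \frac{S_n}{a_n\alpha_n}=\gamma_n u_{n+1}^2+u_n^2-\beta_n u_{n+1}u_n,
\]
where $\gamma_n=\dfrac{\alpha_{n-1}}{\alpha_n}\dfrac{a_n}{a_{n-1}}$ and $\beta_n=\dfrac{\alpha_{n-1}}{\alpha_n}\dfrac{\lambda-b_n}{a_{n-1}}=\gamma_n\dfrac{\lambda-b_n}{a_n}$. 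The hypotheses give $\gamma_n\to1$; and since $a_n\to\infty$ one has $\frac{|\lambda-b_n|}{a_n}\le\frac{|\lambda|}{a_n}+\frac{|b_n|}{a_n}$, so that $\limsup_n|\beta_n|=\limsup_n|b_n|/a_n<2$. Fix $\eta\in(0,2)$ with $\limsup_n|b_n|/a_n<2-\eta$; then there is $N$ such that $|\beta_n|\le2-\eta$ and $|\gamma_n-1|\le\eta/4$ for all $n\ge N$.

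The heart of the matter is then an elementary $2\times2$ eigenvalue estimate valid for $n\ge N$. The form above is represented by the symmetric matrix $\begin{pmatrix}\gamma_n & -\beta_n/2\\ -\beta_n/2 & 1\end{pmatrix}$, whose eigenvalues are $\mu_n^\pm=\tfrac12\bigl((\gamma_n+1)\pm\sqrt{(\gamma_n-1)^2+\beta_n^2}\,\bigr)$. Using $|\gamma_n-1|\le\eta/4$ and $|\beta_n|\le2-\eta$ one bounds $\mu_n^+$ above by a fixed constant $c_2$, and the key one-line computation of the discriminant, $(2-\tfrac{\eta}{4})^2-\tfrac{\eta^2}{16}-(2-\eta)^2=\eta(3-\eta)>0$, shows that $\mu_n^-$ stays above a fixed constant $c_1>0$. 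Hence $c_1\widetilde S_n\le S_n/(a_n\alpha_n)\le c_2\widetilde S_n$ for $n\ge N$. Dividing by $\widetilde S_n$ is legitimate because $\widetilde S_n\ne0$: if $u_n=u_{n+1}=0$ for some $n\ge1$, solving \eqref{defUWektorWlasny} backwards forces $u_{n-1}=\dots=u_0=0$, contradicting $(u_0,u_1)\ne(0,0)$; and $\widetilde S_0=a^2+b^2>0$.

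I expect the only genuinely delicate point to be the \emph{uniform} positivity of $\mu_n^-$: it rests on the exact cancellation between $\gamma_n\to1$ and the \emph{strict} inequality $\limsup_n|b_n|/a_n<2$, and it would break down under the weaker hypotheses ``$\gamma_n$ bounded'' or ``$|b_n|/a_n\le2$''. Everything else — rewriting $S_n$ via \eqref{SN2}, passing to $\beta_n$, and the eigenvalue formula for a $2\times2$ symmetric matrix — is routine.
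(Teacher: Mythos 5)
Your proof is correct and follows essentially the same route as the paper: both normalise $S_n$ by $a_n\alpha_n$, view it as a quadratic form in $(u_n,u_{n+1})$ via \eqref{SN2}, and bound the extreme eigenvalues $\tfrac12\bigl((\gamma_n+1)\pm\sqrt{(\gamma_n-1)^2+\beta_n^2}\,\bigr)$ of the associated $2\times2$ symmetric matrix, using $\gamma_n\to1$ and $\limsup|b_n|/a_n<2$ to keep them between two positive constants. Your version merely makes the paper's limiting step quantitative (the $\eta(3-\eta)>0$ computation) and adds the worthwhile observation, left implicit in the paper, that $\widetilde S_n\neq0$ so the ratio is well defined.
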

		\begin{proof}
			Observe that from the representation~\eqref{SN2} we have that $S_n$ is a quadratic form with respect to variables $u_n$ and $u_{n+1}$. Let the minimal and the maximal value of $S_n$ under the condition $\widetilde{S}_n = 1$ be denoted by $w_n^{\text{min}}$ and $w_n^{\text{max}}$ respectively. Then
         \begin{align*}
            \frac{2 w_n^{\text{min}}}{a_n \alpha_n} &= 1 + \frac{\alpha_{n-1}}{\alpha_n} \frac{a_n}{a_{n-1}} - \sqrt{\left(1 - \frac{\alpha_{n-1}}{\alpha_n} \frac{a_n}{a_{n-1}} \right)^2 + \left(\frac{\alpha_{n-1}}{\alpha_n} \frac{a_n}{a_{n-1}} \frac{\lambda - b_n}{a_n} \right)^2},\\
            \frac{2 w_n^{\text{max}}}{a_n \alpha_n} &= 1 + \frac{\alpha_{n-1}}{\alpha_n} \frac{a_n}{a_{n-1}} + \sqrt{\left(1 - \frac{\alpha_{n-1}}{\alpha_n} \frac{a_n}{a_{n-1}} \right)^2 + \left(\frac{\alpha_{n-1}}{\alpha_n} \frac{a_n}{a_{n-1}} \frac{\lambda - b_n}{a_n} \right)^2}.
         \end{align*}
         Letting $n \rightarrow \infty$ we see that for large $n$ there is a positive upper and lower bound of the above expressions. What ends the proof.
		\end{proof}
		
		\begin{wn} \label{wniosekDodatniKomutator}
			Under the assumptions of Proposition~\ref{faktAsymptotyka}, together with
			\[
				\quad \sum_{n=0}^\infty \frac{1}{a_n \alpha_n} = \infty,
			\]
			if $\liminf S_n > 0$ then $u \notin \ell^2$.
		\end{wn}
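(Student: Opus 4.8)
The plan is to combine the two-sided estimate of Proposition~\ref{faktAsymptotyka} with the divergence hypothesis $\sum_n (a_n \alpha_n)^{-1} = \infty$ to force $\sum_n u_n^2 = \infty$.

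First I would fix the constants. By Proposition~\ref{faktAsymptotyka} there are $c_1, c_2 > 0$ and an index $N$ such that for all $n \geq N$
\[
   c_1 a_n \alpha_n \widetilde{S}_n \leq S_n \leq c_2 a_n \alpha_n \widetilde{S}_n .
\]
Since by hypothesis $\liminf_n S_n > 0$, after enlarging $N$ if necessary we may assume $S_n \geq c > 0$ for all $n \geq N$, where $c = \tfrac12 \liminf_n S_n$. Combining this with the upper estimate above gives, for $n \geq N$,
\[
   \widetilde{S}_n \geq \frac{S_n}{c_2 a_n \alpha_n} \geq \frac{c}{c_2} \cdot \frac{1}{a_n \alpha_n}.
\]

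Next I would sum over $n$. Using $\widetilde{S}_n = u_{n+1}^2 + u_n^2$ and the assumption $\sum_{n} (a_n \alpha_n)^{-1} = \infty$ (which is insensitive to dropping the finitely many initial terms $n < N$), we obtain
\[
   \sum_{n=N}^\infty \left( u_{n+1}^2 + u_n^2 \right) = \sum_{n=N}^\infty \widetilde{S}_n \geq \frac{c}{c_2} \sum_{n=N}^\infty \frac{1}{a_n \alpha_n} = \infty.
\]
On the other hand $\sum_{n=N}^\infty (u_{n+1}^2 + u_n^2) \leq 2 \sum_{n=N}^\infty u_n^2$, so $\sum_{n} u_n^2 = \infty$, i.e. $u \notin \ell^2$, which is the claim.

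There is no real obstacle here: the statement is a direct corollary, and the only point requiring the slightest care is that $\liminf_n S_n > 0$ must be turned into a genuine uniform lower bound $S_n \geq c > 0$ valid from some index on, so that it can be paired termwise with the divergent series $\sum (a_n\alpha_n)^{-1}$. Everything else is the bookkeeping of discarding a finite initial segment and absorbing the harmless factor $2$ coming from each $u_n^2$ appearing in two consecutive $\widetilde{S}_n$.
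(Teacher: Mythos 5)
Your argument is correct and is essentially identical to the paper's proof: both use the upper bound $S_n \leq c_2 a_n \alpha_n \widetilde{S}_n$ from Proposition~\ref{faktAsymptotyka} together with the eventual lower bound $S_n \geq c > 0$ to get $\widetilde{S}_n \geq c'/(a_n\alpha_n)$, and then sum against the divergent series. The paper merely states this more tersely; your bookkeeping with the factor $2$ and the discarded initial segment is exactly what is implicit there.
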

		\begin{proof}
			Since $\liminf S_n > 0$ by Proposition~\ref{faktAsymptotyka} there exists a constant $c > 0$ such that for every $n$ sufficiently large we have
			\[
				\frac{c}{a_n \alpha_n} \leq \widetilde{S}_n
			\]
			what ends the proof.
		\end{proof}
		
		Now we are ready to prove Theorem~\ref{twSpektrumOgolne}.
		\begin{proof}[Proof of Theorem~\ref{twSpektrumOgolne}]
			By virtue of Corollary~\ref{wniosekDodatniKomutator} it is enough to show that $\liminf S_n > 0$ for every generalized eigenvector $\{ u_n \}$.
			
			By Proposition~\ref{faktAsymptotyka} there exists $N$ such that for every $n \geq N$ holds $S_n > 0$. Let us define $F_n = (S_{n+1} - S_n) / S_n$. Then $S_{n+1} / S_n = 1 + F_n$, thus
			\[
				\frac{S_n}{S_N} = \prod_{k=N}^{n-1} (1 + F_n).
			\]
			Hence
			\begin{equation} \label{sumowalnoscFN}
				\sum_{n=1}^\infty F_n^- < \infty.
			\end{equation}
         implies $\liminf S_n > 0$. Observe that by \eqref{SN1} and \eqref{SN2} we get
			\[
				S_{n+1} - S_n = \left( a_{n+1} \alpha_{n+1} - \frac{\alpha_{n-1}}{a_{n-1}} a_n^2 \right) u_{n+1}^2 + \left( \frac{\alpha_{n-1}}{a_{n-1}} a_n (\lambda - b_n) - \alpha_n (\lambda - b_{n+1}) \right) u_{n+1} u_n.
			\]
			Therefore
			\begin{multline*}
				F_n = \frac{S_{n+1} - S_n}{S_n} = \Bigg[ \left( a_{n+1} \alpha_{n+1} - \frac{\alpha_{n-1}}{a_{n-1}} a_n^2 \right) \frac{u_{n+1}^2}{\widetilde{S}_n} \\
				+ \left( \frac{\alpha_{n-1}}{a_{n-1}} a_n (\lambda - b_n) - \alpha_n(\lambda - b_{n+1}) \right) \frac{u_n u_{n+1}}{\widetilde{S}_n} \Bigg] \frac{\widetilde{S}_n}{S_n},
			\end{multline*}
         where $\widetilde{S}_n = u_n^2 + u_{n+1}^2$.
			By Proposition~\ref{faktAsymptotyka} and $|u_n u_{n+1}| / \widetilde{S}_n \leq 1$, there exists a constant $c > 0$ such that
			\[
				F_n^- \leq \frac{c}{a_n \alpha_n} \left( \left[ a_{n+1} \alpha_{n+1} - \frac{\alpha_{n-1}}{a_{n-1}} a_n^2 \right]^- + \left|\frac{\alpha_{n-1}}{a_{n-1}} a_n (\lambda - b_n) - \alpha_n (\lambda - b_{n+1}) \right| \right).
			\]
			Since
			\[
				\frac{1}{a_n \alpha_n} \left[ a_{n+1} \alpha_{n+1} - \frac{\alpha_{n-1}}{a_{n-1}} a_n^2 \right]^- = \left[ \frac{a_{n+1}}{a_n} \frac{\alpha_{n+1}}{\alpha_n} - \frac{a_n}{a_{n-1}} \frac{\alpha_{n-1}}{\alpha_n} \right]^-
			\]
			and 
			\begin{multline*}
				\frac{1}{a_n \alpha_n} \left|\frac{\alpha_{n-1}}{a_{n-1}} a_n (\lambda - b_n) - \alpha_n (\lambda - b_{n+1}) \right| = \left| \lambda \left( \frac{1}{a_{n-1}} \frac{\alpha_{n-1}}{\alpha_n} - \frac{1}{a_n} \right) + \left( \frac{b_{n+1}}{a_n} -\frac{b_n}{a_{n-1}} \frac{\alpha_{n-1}}{\alpha_n} \right)\right| \\
            \leq \frac{|\lambda|}{a_{n-1}} \left| \frac{\alpha_{n-1}}{\alpha_n} - \frac{a_{n-1}}{a_n} \right| + \left| \frac{b_{n+1}}{a_n} - \frac{b_n}{a_{n-1}} \frac{\alpha_{n-1}}{\alpha_n} \right|
			\end{multline*}
			we obtain~\eqref{sumowalnoscFN}.
		\end{proof}
		
		\begin{uwaga} \label{UwagaSubordynacja}
			If we replace the condition~(b) by
			\begin{flalign}
				\tag{b'} &\sum_{n=0}^\infty \left| \frac{a_{n+1}}{a_n} \frac{\alpha_{n+1}}{\alpha_n} - \frac{a_n}{a_{n-1}} \frac{\alpha_{n-1}}{\alpha_n} \right| < \infty,&
			\end{flalign}
			then $\limsup S_n < \infty$ and consequently $c_1/(a_n \alpha_n) \leq \widetilde{S}_n \leq c_2/(a_n \alpha_n)$ for $c_1>0, c_2>0$. Hence by using subordination method we can show that the spectrum of the matrix~$C$ is purely absolutely continuous (see e.g. \cite{SLC}, \cite{JN1}).
		\end{uwaga}
	
	\section{Special cases} \label{sec:specialCases}
      In this section we are going to show a few choices of the sequence $\{ \alpha_n \}$ from Theorem~\ref{twSpektrumOgolne}. In this way we show flexibility of our approach.
      
      The following theorem was proven in \cite[Theorem~1.6]{JM1} and is a generalization of \cite[Theorem~1.10]{SLC}. In the proof the authors analyse transfer matrices. Therefore our argument gives an alternative proof.
		\begin{tw}[Janas, Moszyński \cite{JM1}] \label{twClark}
			Assume that 
         \begin{flalign}
            \tag{a} &\lim_{n \rightarrow \infty} a_n = \infty, &\\
            \tag{b} &\sum_{n=0}^\infty \frac{1}{a_n} = \infty, \\
            \tag{c} &\text{the sequences } \left\{ \frac{a_{n-1}}{a_n} \right\}, \left\{ \frac{1}{a_n} \right\} \text{ and } \left\{ \frac{b_{n}}{a_n} \right\} \text{ are of bounded variation}, \\
            \tag{d} &\lim_{n \rightarrow \infty} \frac{|b_n|}{a_n} < 2.
         \end{flalign}
			Then $\sigma(C) = \mathbb{R}$ and the matrix~$C$ has purely absolutely continuous spectrum.
		\end{tw}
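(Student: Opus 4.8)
\emph{Proof proposal.} The plan is to obtain the statement from Theorem~\ref{twSpektrumOgolne} together with Remark~\ref{UwagaSubordynacja}, applied with the \emph{constant} sequence $\alpha_n \equiv 1$. With this choice the conditions of Theorem~\ref{twSpektrumOgolne} collapse to bounded-variation statements: (e) becomes $\sum_n 1/a_n = \infty$, which is hypothesis (b) here; (g) is hypothesis (d); (f) becomes $a_n/a_{n-1} \to 1$; and (c) becomes $\sum_n \frac{1}{a_{n-1}}\bigl|\frac{a_{n-1}}{a_n} - 1\bigr| = \sum_n \bigl|\frac{1}{a_n} - \frac{1}{a_{n-1}}\bigr|$, i.e. exactly the bounded variation of $\{1/a_n\}$ assumed in (c). Since we want pure absolute continuity, we will verify condition (b') of Remark~\ref{UwagaSubordynacja}, namely $\sum_n \bigl|\frac{a_{n+1}}{a_n} - \frac{a_n}{a_{n-1}}\bigr| < \infty$, in place of (b).

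The one point requiring a genuine argument, and the main obstacle, is the normalization $a_{n-1}/a_n \to 1$. Bounded variation of $\{a_{n-1}/a_n\}$ only yields convergence to some $g \ge 0$. Hypothesis (a) rules out $g > 1$, since then $\{a_n\}$ would be eventually decreasing, contradicting $a_n \to \infty$; and $g < 1$ would force geometric growth of $a_n$, hence $\sum_n 1/a_n < \infty$, contradicting (b). Therefore $g = 1$. In particular $\{a_{n-1}/a_n\}$ is eventually bounded away from $0$, so its reciprocal $\{a_n/a_{n-1}\}$ is again a bounded sequence of bounded variation converging to $1$. This immediately gives (f), and, since a bounded-variation sequence has summable increments, it gives (b').

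It remains to check condition (d) of Theorem~\ref{twSpektrumOgolne}, which with $\alpha_n \equiv 1$ reads $\sum_n \bigl|\frac{b_{n+1}}{a_n} - \frac{b_n}{a_{n-1}}\bigr| < \infty$. Writing $\frac{b_{n+1}}{a_n} - \frac{b_n}{a_{n-1}} = \frac{b_{n+1}}{a_{n+1}}\cdot\frac{a_{n+1}}{a_n} - \frac{b_n}{a_n}\cdot\frac{a_n}{a_{n-1}}$ exhibits this as the sequence of increments of the product of the two bounded, bounded-variation sequences $\{b_n/a_n\}$ (by (c)) and $\{a_n/a_{n-1}\}$ (established above); a product of such sequences has bounded variation, so the sum converges. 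With (a), (b'), (c), (d), (e), (f), (g) all verified, Theorem~\ref{twSpektrumOgolne} yields that $C$ is self-adjoint with $\sigma_p(C) = \emptyset$ and $\sigma(C) = \mathbb{R}$, while Remark~\ref{UwagaSubordynacja} (using (b')) gives the two-sided bound $c_1/(a_n\alpha_n) \le \widetilde S_n \le c_2/(a_n\alpha_n)$ and hence, by subordination theory, pure absolute continuity of the spectrum. Apart from the normalization step, the proof is a matter of rewriting the hypotheses of Theorem~\ref{twSpektrumOgolne} and invoking stability of bounded variation under reciprocals (legitimate precisely because $g=1$) and products.
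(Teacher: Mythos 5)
Your proposal is correct and follows essentially the same route as the paper: both apply Theorem~\ref{twSpektrumOgolne} with $\alpha_n \equiv 1$ via Remark~\ref{UwagaSubordynacja}, both deduce $a_{n-1}/a_n \to 1$ by combining bounded variation with hypotheses (a) and (b), and both handle condition (d) of Theorem~\ref{twSpektrumOgolne} through the product decomposition $\frac{b_{n+1}}{a_n} = \frac{b_{n+1}}{a_{n+1}}\cdot\frac{a_{n+1}}{a_n}$. You merely spell out a few verifications the paper treats as immediate.
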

		\begin{proof}
         Let $\alpha_n \equiv 1$. By virtue of Remark~\ref{UwagaSubordynacja} we need to check the assumptions (b'), (d) and (f) of Theorem~\ref{twSpektrumOgolne}.
         
         Since the sequence $\{ a_{n-1}/a_n \}$ is of bounded variation it is convergent to a number $a$. From the condition~(b) we have $a \geq 1$, whereas the condition~(a) gives $a \leq 1$. Thus the sequence $\{ a_{n+1}/a_n \}$ is of bounded variation as well. This proves the conditions (b') and (f) of Theorem~\ref{twSpektrumOgolne}.
         
         The sequence $\{ b_{n+1}/a_n \}$ is of bounded variation because $\frac{b_{n+1}}{a_n} = \frac{b_{n+1}}{a_{n+1}} \cdot \frac{a_{n+1}}{a_n}$. The proof is complete.
		\end{proof}

      The next theorem imposes very simple conditions on Jacobi matrices. In Section~\ref{sec:TheoremAAppl} we show its applications, furthermore in Section~\ref{sec:Examples} we discuss sharpness of the assumptions.
      
		\begin{tw} \label{twSpektrumAKwadrat}
			Assume
         \begin{flalign}
            \tag{a} &\lim_{n \rightarrow \infty} a_n = \infty, &\\
            \tag{b} &\sum_{n=0}^\infty \frac{1}{a_n^2} = \infty, \\
            \tag{c} &\sum_{n=0}^\infty \left[ \left( \frac{a_{n+1}}{a_n} \right)^2 - 1 \right]^- < \infty, \\
            \tag{d} &\limsup_{n \rightarrow \infty} \frac{|b_n|}{a_n} < 2, \\
            \tag{e} &\sum_{n=0}^\infty \frac{|b_{n+1} - b_n|}{a_n} < \infty.
         \end{flalign}
         Then the Jacobi matrix $C$ is self-adjoint and satisfies $\sigma_p(C) = \emptyset$ and $\sigma(C) = \mathbb{R}$.
		\end{tw}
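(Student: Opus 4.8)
The plan is to obtain this statement as an immediate consequence of Theorem~\ref{twSpektrumOgolne} by making the natural choice $\alpha_n = a_n$. This is precisely the substitution for which the two hypotheses of Theorem~\ref{twSpektrumOgolne} that compare the ratios $\alpha_{n-1}/\alpha_n$ and $a_{n-1}/a_n$ — namely (c) and (f) — become trivially true, so that only the five remaining conditions survive, and each of them collapses onto exactly one of the five hypotheses of the present theorem. So the first step is simply to record $\alpha_n = a_n$ (which is admissible, since $a_n > 0$) and the identity $\alpha_{n-1}/\alpha_n = a_{n-1}/a_n$.

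The second step is the routine matching. With $\alpha_n = a_n$: condition (a) of Theorem~\ref{twSpektrumOgolne} is literally hypothesis~(a); in condition (c) the summand vanishes identically; in condition (f) the expression equals the constant $1$; condition (e), which reads $\sum 1/(a_n \alpha_n) = \infty$, becomes $\sum 1/a_n^2 = \infty$, i.e.\ hypothesis~(b); condition (g) is hypothesis~(d). For condition (b) the bracket simplifies because $\frac{a_{n+1}}{a_n}\frac{\alpha_{n+1}}{\alpha_n} - \frac{a_n}{a_{n-1}}\frac{\alpha_{n-1}}{\alpha_n} = (a_{n+1}/a_n)^2 - 1$, so (b) becomes $\sum [(a_{n+1}/a_n)^2 - 1]^- < \infty$, i.e.\ hypothesis~(c). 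Finally, in condition (d) of Theorem~\ref{twSpektrumOgolne} one has $\frac{b_n}{a_{n-1}}\frac{\alpha_{n-1}}{\alpha_n} = b_n/a_n$, so the summand reads $|b_{n+1}/a_n - b_n/a_n| = |b_{n+1} - b_n|/a_n$, and (d) becomes hypothesis~(e). Hence all assumptions of Theorem~\ref{twSpektrumOgolne} hold, and its conclusion — $C$ self-adjoint, $\sigma_p(C) = \emptyset$, $\sigma(C) = \mathbb{R}$ — is exactly what is claimed.

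The only point worth a remark, and certainly not a genuine obstacle, is that the series in Theorem~\ref{twSpektrumOgolne} are indexed slightly differently (some start at $n=1$, and here we sum from $n=0$); since adding or deleting finitely many terms affects neither finiteness nor divergence of a series, this discrepancy is immaterial. In short, there is no hard step at all: the entire difficulty of the result is already absorbed into Theorem~\ref{twSpektrumOgolne}, and the content of this theorem is just the observation that $\alpha_n = a_n$ is the convenient admissible choice there.
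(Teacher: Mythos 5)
Your proposal is correct and is exactly the paper's proof: the paper's entire argument is ``Apply Theorem~\ref{twSpektrumOgolne} with $\alpha_n = a_n$,'' and your verification of the seven hypotheses under that substitution is accurate. You have merely written out the routine matching that the paper leaves implicit.
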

		\begin{proof}
			Apply Theorem~\ref{twSpektrumOgolne} with $\alpha_n = a_n$.
		\end{proof}
		
		Special cases of the following theorem were examined in \cite{SP} and \cite{JM1} using commutator methods.
		\begin{tw} \label{twJanasMoszynskiPedersen}
			Let $\log^{(i)}$ be defined by $\log^{(0)}(x) = x, \log^{(i+1)}(x) = \log(\log^{(i)}(x))$. Let $g_j(n) = \prod_{i=1}^j \log^{(i)}(n)$. Assume that for positive numbers $K, N$ and for a summable nonnegative sequence~$c_n$
         \begin{flalign}
            \tag{a} &\lim_{n \rightarrow \infty} a_n = \infty, &\\
            \tag{b} &1 - c_n \leq \frac{a_n}{a_{n-1}} \leq 1 + \frac{1}{n} + \sum_{j=1}^K \frac{1}{n g_j(n)} + c_n \text{ for } n > N, \\
            \tag{c} &\text{the sequence } \{ b_n \} \text{ is bounded and } \sum_{n=0}^\infty \frac{|b_{n+1} - b_n|}{a_n} < \infty, \\
            \tag{d} &\sum_{n=1}^\infty \frac{1}{n a_n} < \infty.
         \end{flalign}
			Then $\sigma_p(C) = \emptyset$ and $\sigma(C) = \mathbb{R}$.
		\end{tw}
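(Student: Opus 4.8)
The plan is to apply Theorem~\ref{twSpektrumOgolne} with
\[
  \alpha_n = \frac{n\,g_K(n)}{a_n}
\]
for every $n$ large enough that $\log^{(K)}(n)>0$, and $\alpha_n = 1/a_n$ for the remaining finitely many indices (none of the series in Theorem~\ref{twSpektrumOgolne} is affected by changing finitely many terms). The reason for this choice is the elementary identity
\[
  \frac{d}{dx}\log\bigl(x\,g_K(x)\bigr) = \frac1x + \sum_{j=1}^{K}\frac{1}{x\,g_j(x)} =: \psi(x),
\]
which yields, writing $\psi_n := \psi(n)$ and $r_n := a_n/a_{n-1}$,
\[
  \frac{a_n}{a_{n-1}}\cdot\frac{\alpha_n}{\alpha_{n-1}}
  = \frac{n\,g_K(n)}{(n-1)\,g_K(n-1)}
  = \exp\!\Bigl(\int_{n-1}^{n}\psi(x)\,dx\Bigr)
  = 1 + \psi_n + \varepsilon_n ,
\]
where $\sum_n|\varepsilon_n|<\infty$ since $\psi'(x)=O(x^{-2})$. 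Thus $\tfrac{a_n}{a_{n-1}}\tfrac{\alpha_n}{\alpha_{n-1}}$ is, up to a summable error, exactly the upper‑bound profile $1+\tfrac1n+\sum_j\tfrac1{ng_j(n)}$ from hypothesis~(b); this is precisely what converts the one‑sided information $1-c_n\le r_n\le 1+\psi_n+c_n$ into two‑sided summable bounds, while keeping $a_n\alpha_n = n\,g_K(n)$ small enough that $\sum_n 1/(a_n\alpha_n)=\sum_n 1/(n\,g_K(n))=\infty$ by the integral test.

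First I would dispose of the cheap conditions. From $r_n\ge 1-c_n$ and $\sum c_n<\infty$ one gets $\inf_n a_n>0$, and with hypothesis~(a) also $a_n\to\infty$ and $r_n\to1$; hence condition~(g) of Theorem~\ref{twSpektrumOgolne} holds because $\{b_n\}$ is bounded, condition~(a) is immediate, and condition~(e) holds as just observed. Setting $q_n := \alpha_n/\alpha_{n-1} = (1+\psi_n+\varepsilon_n)/r_n$, condition~(f) is also immediate: $\tfrac{\alpha_{n-1}}{\alpha_n}\tfrac{a_n}{a_{n-1}} = r_n/q_n = r_n^{2}/(1+\psi_n+\varepsilon_n)\to1$.

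The substantial part is conditions~(b), (c), (d) of Theorem~\ref{twSpektrumOgolne}. For~(c) I would write $\bigl|\tfrac{a_{n-1}}{a_n}-\tfrac{\alpha_{n-1}}{\alpha_n}\bigr| = |q_n-r_n|/(r_nq_n)$ and, since $r_nq_n = 1+\psi_n+\varepsilon_n$ is bounded away from $0$, reduce to $\sum_n |q_n-r_n|/a_{n-1}<\infty$. From $q_n=(1+\psi_n+\varepsilon_n)/r_n$ together with the squared bound $1-2c_n\le r_n^{2}\le 1+2\psi_n+2c_n+O(\psi_n^{2}+c_n^{2})$ one gets $|q_n-r_n|\lesssim \psi_n+c_n+|\varepsilon_n|+O(n^{-2})$; since $\psi_n\le 2/n$ for large $n$, the $\psi_n$ part is handled by $\sum_n 1/(na_{n-1}) = \sum_m 1/((m+1)a_m)<\infty$ (hypothesis~(d)), and the remaining terms are summable with $a_{n-1}$ bounded below. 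Condition~(d) of Theorem~\ref{twSpektrumOgolne} then follows from the decomposition
\[
  \frac{b_{n+1}}{a_n}-\frac{b_n}{a_{n-1}}\frac{\alpha_{n-1}}{\alpha_n}
  = \frac{b_{n+1}-b_n}{a_n} + b_n\Bigl(\frac1{a_n}-\frac1{a_{n-1}q_n}\Bigr),
\]
boundedness of $\{b_n\}$, hypothesis~(c), and the bound $\bigl|\tfrac1{a_n}-\tfrac1{a_{n-1}q_n}\bigr|\lesssim |q_n-r_n|/a_{n-1}$. Finally, for condition~(b) of Theorem~\ref{twSpektrumOgolne},
\[
  \frac{a_{n+1}}{a_n}\frac{\alpha_{n+1}}{\alpha_n}-\frac{a_n}{a_{n-1}}\frac{\alpha_{n-1}}{\alpha_n}
  = (1+\psi_{n+1}+\varepsilon_{n+1}) - \frac{r_n^{2}}{1+\psi_n+\varepsilon_n};
\]
multiplying by $1+\psi_n+\varepsilon_n\ge\tfrac12$ (for large $n$) and inserting $r_n^{2}\le(1+\psi_n+c_n)^{2}$ bounds the negative part by $|\psi_{n+1}-\psi_n|+|\varepsilon_n|+|\varepsilon_{n+1}|+2c_n+(\text{summable})$, which is summable because $|\psi_{n+1}-\psi_n| = O(n^{-2})$. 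Once all seven conditions are checked, Theorem~\ref{twSpektrumOgolne} gives the conclusion.

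The step I expect to be the main obstacle is the verification of condition~(b) of Theorem~\ref{twSpektrumOgolne}: it is precisely there that the exact shape of the upper bound in hypothesis~(b) enters, and the whole scheme rests on having engineered $\tfrac{a_n}{a_{n-1}}\tfrac{\alpha_n}{\alpha_{n-1}}$ to equal $1+\psi_n$ modulo a summable error. The only other points requiring care are the calculus estimate $\psi'(x)=O(x^{-2})$ (hence $|\psi_{n+1}-\psi_n| = O(n^{-2})$ and $\sum_n|\varepsilon_n|<\infty$) and the classical divergence $\sum_n 1/(n\,g_K(n))=\infty$, both of which are routine.
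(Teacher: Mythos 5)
Your proposal is correct and follows essentially the same route as the paper: the same choice $\alpha_n = n\,g_K(n)/a_n$, an application of Theorem~\ref{twSpektrumOgolne}, the same reduction of its condition~(d) to condition~(c) via the decomposition with $b_{n+1}-b_n$, and the same Taylor-type expansion of $g_K$ to control the negative parts (the paper expands $g_K$ directly at $n-1$ with $|g_K''(x)|\leq c x^{-3/2}$, while you package the identical estimate through $\psi=(\log(xg_K(x)))'$ and $\exp\bigl(\int\psi\bigr)$, which is only a cosmetic difference). No gaps.
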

		\begin{proof}
         We can assume that $\log^{(K)}(N) > 0$. Set
         \[
            \alpha_n = 
            \begin{cases}
               1 & \text{ for } n < N, \\
               \frac{n g_K(n)}{a_n} & \text{ otherwise.}
            \end{cases}
         \] 
         To get the conclusion we need to check the assumptions (b), (d) and (c) of Theorem~\ref{twSpektrumOgolne}.
			
			To show Theorem~\ref{twSpektrumOgolne}(b) let us observe that the assumption~(b) of the present theorem gives 
         \[
            \left( \frac{a_n}{a_{n-1}} \right)^2 \leq 1 + \frac{2}{n} + \sum_{j=1}^K \frac{2}{n g_j(n)} + c_n'
         \] 
         for a summable sequence $c_n'$. Therefore
			\begin{multline*}
				\frac{a_{n+1}}{a_n} \frac{\alpha_{n+1}}{\alpha_n} - \frac{a_n}{a_{n-1}} \frac{\alpha_{n-1}}{\alpha_n} = \frac{n+1}{n} \frac{g_K(n+1)}{g_K(n)} - \left( \frac{a_n}{a_{n-1}} \right)^2 \frac{n-1}{n} \frac{g_K(n-1)}{g_K(n)} \\
				\geq \frac{n+1}{n} \frac{g_K(n+1)}{g_K(n)} - \frac{n-1}{n} \left( 1 + \frac{2}{n} + \sum_{j=1}^K \frac{2}{n g_j(n)} + c_n' \right) \frac{g_K(n-1)}{g_K(n)} \\
            \geq \frac{n+1}{n} \frac{g_K(n+1)}{g_K(n)} - \left( \frac{n+1}{n} + \sum_{j=1}^K \frac{2}{n g_j(n)} + c_n' \right) \frac{g_K(n-1)}{g_K(n)}.
			\end{multline*}
			Since the functions $g_j$ are increasing, we have
			\begin{equation} \label{rownJM1}
				\geq \frac{n-1}{n} \left( \frac{g_K(n+1) - g_K(n-1)}{g_K(n)} \right) - \frac{g_K(n-1)}{n g_K(n)} \sum_{j=1}^K \frac{2}{g_j(n-1)} - c_n'.
			\end{equation}
         Next, observe that
         \[
            g'_K(x) = g_K(x) \sum_{j=1}^K \frac{(\log^{(j)})'(x)}{\log^{(j)}(x)}.
         \]
         Therefore 
         \[
            g_K'(x) = g_K(x) \sum_{j=1}^K \frac{1}{x g_j(x)}.
         \]
         Hence Taylor's formula applied to $g_K$ at the point $n-1$ gives
         \[
            (n-1)[g_K(n+1) - g_K(n-1)] = g_K(n-1) \sum_{j=1}^K \frac{2}{g_j(n-1)} + 2 (n-1)g_K''(\xi)
         \]
         for $\xi \in (n-1, n+1)$. Direct computation shows $|g_K''(x)| \leq c/x^{3/2}$ for $x$ sufficiently large and a constant $c > 0$. Therefore the right-hand side of \eqref{rownJM1} is summable.
			
			Next, since
         \[
            \frac{b_{n+1}}{a_n} - \frac{b_n}{a_{n-1}} \frac{\alpha_{n-1}}{\alpha_n} = \frac{b_{n+1} - b_n}{a_n} + \frac{b_n}{a_{n-1}} \left( \frac{a_{n-1}}{a_n} - \frac{\alpha_{n-1}}{\alpha_n} \right)
         \]
         the condition Theorem~\ref{twSpektrumOgolne}(d) reduces to showing Theorem~\ref{twSpektrumOgolne}(c):
			\begin{equation} \label{warunekC}
				\sum_{n=0}^\infty \frac{1}{a_{n-1}} \left| \frac{a_{n-1}}{a_n} - \frac{n-1}{n} \frac{g_K(n-1)}{g_K(n)} \frac{a_n}{a_{n-1}} \right| < \infty.
			\end{equation}
			For constants $K'$ and $c > 0$ we have
			\begin{multline*}
				\frac{a_{n-1}}{a_n} - \frac{n-1}{n} \frac{g_K(n-1)}{g_K(n)} \frac{a_n}{a_{n-1}} \geq \frac{1}{1 + \frac{K'}{n} + c_n} - \left( 1 - \frac{1}{n} \right) \left( 1 + \frac{K'}{n} + c_n \right) \geq -\frac{c}{n} - c_n'
			\end{multline*}
         for a summable sequence $c_n'$. On the other hand 
         \begin{multline*}
				\frac{a_{n-1}}{a_n} - \frac{n-1}{n} \frac{g_K(n-1)}{g_K(n)} \frac{a_n}{a_{n-1}} \leq \frac{1}{1-c_n} - \left( 1 - \frac{1}{n} \right) \frac{g_K(n-1)}{g_K(n)} (1-c_n) \\
            = 1 - \frac{g_K(n-1)}{g_K(n)} + c_n' = \frac{g_K(n) - g_K(n-1)}{g_K(n)} + c_n'
			\end{multline*}
         for a summable sequence $c_n'$. Hence as previously Taylor's formula applied to $g_K$ at the point $n-1$ gives
         \[
            \frac{a_{n-1}}{a_n} - \frac{n-1}{n} \frac{g_K(n-1)}{g_K(n)} \frac{a_n}{a_{n-1}} \leq \frac{c}{n} + c''_n
         \]
         for a constant $c > 0$ and summable sequence $c''_n$. Finally, condition~(d) leads to \eqref{warunekC}.
		\end{proof}
		\begin{uwaga}
         When we compare Theorem~\ref{twSpektrumAKwadrat} with Theorem~\ref{twJanasMoszynskiPedersen}, we see that Theorem~\ref{twJanasMoszynskiPedersen} is interesting only in the case when $\sum_{n=0}^\infty 1/a_n^2 < \infty$. In this case the condition Theorem~\ref{twJanasMoszynskiPedersen}(d) is satified.
		\end{uwaga}
		
		The sequence similar to $\alpha_n = n a_n^{-1}$ was used in the proof of \cite[Theorem 4.1]{SP} and \cite[Theorem 2.1]{JM1}. There was shown that under the stronger assumptions (which in particular imply $c_n \equiv 0$, $b_n \equiv 0$ and $K=0$) the measure $\mu$ is absolutely continuous. Whether $\sigma(C) = \mathbb{R}$ was not investigated.
	
      \begin{przyklad}
         Let $K > 0$. Fix $M$ such that $\log^{(K)}(M) > 0$. Then for the sequences $a_n = (n+M) g_K(n+M)$ and $b_n \equiv 0$ the assumptions of Theorem~\ref{twJanasMoszynskiPedersen} are satisfied.
      \end{przyklad}
   
	\section{Applications of Theorem~\ref{twSpektrumAKwadrat}} \label{sec:TheoremAAppl}
      \subsection{Birth and death processes}
         Given sequences $\{\lambda_n\}_{n=0}^\infty$ and $\{\mu_n\}_{n=0}^\infty$ such that $\lambda_n > 0, \mu_{n+1} > 0 \  (n \geq 0)$ and $\mu_0 \geq 0$ we set
         \begin{equation} \label{postacUS}
            Q =
               \left( 
                  \begin{array}{cccccc}
                  -(\lambda_0 + \mu_0) & \lambda_0 & 0   & 0   &\ldots \\
                  \mu_1 & -(\lambda_1 + \mu_1) & \lambda_1 & 0 & \ldots \\
                  0   & \mu_2 & -(\lambda_2 + \mu_2) & \lambda_2 & \ldots \\
                  0   & 0   & \mu_3 & -(\lambda_3 + \mu_3) & \ldots \\
                  \vdots & \vdots & \vdots & \vdots & \ddots
                  \end{array} 
               \right).
         \end{equation}
         Let us define
         \[
            \ell^2(\pi) = \{ x \in \mathbb{C}^\mathbb{N} \colon \sum_{n=0}^\infty \pi_n |x_n|^2 < \infty \}, \quad \langle x, y \rangle_{\ell^2(\pi)} = \sum_{n=0}^\infty \pi_n x_n \overline{y_n}
         \]
         where
         \[
            \pi_0 = 1, \quad \pi_n = \frac{\lambda_0 \lambda_1 \ldots \lambda_{n-1}}{\mu_1 \mu_2 \ldots \mu_n}.
         \]
         
         The operator~$Q$ is well-defined on the domain $\Dom(Q) = \{ x \in \ell^2(\pi) \colon Q x \in \ell^2(\pi)\}$. Notice that any sequence with finite support belongs to $\Dom(Q)$. If the operator~$Q$ is self-adjoint it is of a probabilistic interest to examine the spectrum~$\sigma(Q)$ of the operator~$Q$ (see e.g. \cite{SKJMG}).
         
         \begin{tw} \label{twBDProcess}
            Let $a = (\mu_1, \lambda_1, \mu_2, \lambda_2, \mu_3, \lambda_3, \ldots)$. Assume
            \begin{flalign}
               \tag{a} &\lim_{n \rightarrow \infty} a_n = \infty, &\\
               \tag{b} &\sum_{n=0}^\infty \frac{1}{a_n} = \infty, \\
               \tag{c} &\sum_{n=0}^\infty \left[ \frac{a_{n+1}}{a_n} - 1 \right]^- < \infty.
            \end{flalign}
            Then the matrix~$Q$ is self-adjoint and satisfies $\sigma_p(Q) = \emptyset$ and $\sigma(Q) = (-\infty, 0]$.
         \end{tw}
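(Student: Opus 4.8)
The plan is to pass from $Q$ on the weighted space $\ell^2(\pi)$ to an honest Jacobi matrix on $\ell^2$, and then to recognize that Jacobi matrix as an even (or odd) ``half'' of the square of a larger Jacobi matrix with vanishing diagonal, to which Theorem~\ref{twSpektrumAKwadrat} applies. \emph{First}, conjugating $Q$ by the diagonal unitary $U\colon \ell^2(\pi)\to\ell^2$, $(Ux)_n = \sqrt{\pi_n}\,x_n$, produces the Jacobi matrix $\widehat{Q}$ with weights $\sqrt{\lambda_n\mu_{n+1}}$ and diagonal $-(\lambda_n+\mu_n)$, whose natural domain corresponds under $U$ to $\Dom(Q)$ (this is the identification of birth and death generators with Jacobi matrices mentioned in the introduction). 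By Proposition~\ref{odbicieSpektrum} it therefore suffices to show that the Jacobi matrix $C'$ with weights $a'_n = \sqrt{\lambda_n\mu_{n+1}}$ and diagonal $b'_n = \lambda_n+\mu_n$ is self-adjoint with $\sigma_p(C') = \emptyset$ and $\sigma(C') = [0,\infty)$; then $\sigma(Q) = -\sigma(C') = (-\infty,0]$ and $\sigma_p(Q) = -\sigma_p(C') = \emptyset$.

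\emph{Second}, I would realize $C'$ as a restriction of a square. Let $D$ be the Jacobi matrix with zero diagonal and weights $d_n = \sqrt{x_n}$, where the auxiliary sequence is $x = (\mu_0,\lambda_0,\mu_1,\lambda_1,\mu_2,\lambda_2,\dots)$ if $\mu_0 > 0$, and $x = (\lambda_0,\mu_1,\lambda_1,\mu_2,\lambda_2,\dots)$ if $\mu_0 = 0$ (so that $d_n > 0$ in either case). A direct computation using the formulas \eqref{wzoryNaC2} shows that, in the notation of Proposition~\ref{restrykcjeC}, $C' = D_o$ in the first case and $C' = D_e$ in the second. The reason for this dichotomy is the boundary diagonal entry $b'_0 = \lambda_0+\mu_0$: the even restriction only produces $d_0^2$ at position $0$ (forcing $\mu_0 = 0$), whereas the odd restriction produces $d_0^2 + d_1^2$, which leaves room to place $\mu_0$.

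\emph{Third}, I would apply Theorem~\ref{twSpektrumAKwadrat} both to $D$ and to $\widetilde D$, the zero-diagonal Jacobi matrix with weights $\{d_{n+1}\}_{n=0}^\infty$. Their squared weight sequences are $x$ and $\{x_{n+1}\}$, each of which coincides with $a = (\mu_1,\lambda_1,\mu_2,\lambda_2,\dots)$ up to prepending one or two positive numbers; since prepending finitely many terms affects neither divergence to $\infty$, nor divergence of $\sum_n 1/(\cdot)$, nor summability of the negative parts of consecutive ratios minus one, hypotheses (a)--(c) of Theorem~\ref{twSpektrumAKwadrat} for $D$ and for $\widetilde D$ reduce exactly to hypotheses (a)--(c) of the present theorem, while (d) and (e) hold trivially because the diagonal is zero. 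Hence $D$ and $\widetilde D$ are self-adjoint, $\sigma_p(D) = \sigma_p(\widetilde D) = \emptyset$ and $\sigma(D) = \sigma(\widetilde D) = \mathbb{R}$. \emph{Finally}, since $0\notin\sigma_p(D)$ and $0\notin\sigma_p(\widetilde D)$, Proposition~\ref{restrykcjeC} yields that $D_e$ and $D_o$ are self-adjoint with $\sigma(D_e) = \sigma(D_o) = (\sigma(D))^2 = \mathbb{R}^2 = [0,\infty)$ and $\sigma_p(D_e) = \sigma_p(D_o) = (\sigma_p(D))^2 = \emptyset$. As $C'$ is one of $D_e$, $D_o$, the required properties of $C'$ follow, and the reduction in the first step completes the proof.

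I expect the main obstacle to be the second step: choosing $D$ so that $C'$ is \emph{exactly} an even or odd restriction of $D^2$, rather than merely a rank-one (positive) perturbation of one. This distinction is essential, because a positive rank-one perturbation could in principle create an eigenvalue embedded in $[0,\infty)$, and no information on absolute continuity of $D$ is available here; it is precisely what forces the case split on the sign of $\mu_0$. A secondary, more routine point is that Proposition~\ref{restrykcjeC} needs the hypotheses $0\notin\sigma_p(D)$ and $0\notin\sigma_p(\widetilde D)$, which is why the auxiliary matrix $\widetilde D$ must be handled by Theorem~\ref{twSpektrumAKwadrat} as well.
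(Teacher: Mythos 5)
Your proposal is correct and follows essentially the same route as the paper: reduce $Q$ to the Jacobi matrix with weights $\sqrt{\lambda_n\mu_{n+1}}$ via the diagonal isometry and Proposition~\ref{odbicieSpektrum}, realize it as $D_e$ (when $\mu_0=0$) or $D_o$ (when $\mu_0>0$) of a zero-diagonal Jacobi matrix via Proposition~\ref{restrykcjeC}, and apply Theorem~\ref{twSpektrumAKwadrat}. Your write-up is in fact slightly more explicit than the paper's in verifying the hypothesis $0\notin\sigma_p(\widetilde D)$ for the shifted matrix and in checking that prepending finitely many weights preserves conditions (a)--(c).
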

         \begin{proof}
            Let $P$ be a diagonal matrix with entries $\sqrt{\pi_n}$ on the main diagonal. Then we have $\bar{C} = P Q P^{-1}$, where $\bar{C}$ is the Jacobi matrix associated with sequences $\bar{a}_n = \sqrt{\lambda_n \mu_{n+1}}$ and $\bar{b}_n = -(\lambda_n + \mu_n)$ (see \cite[Section~2]{MK}). Since the matrix~$P \colon \ell^2(\pi) \rightarrow \ell^2$ is an isometry (hence $P$ and $P^{-1}$ are bounded) it is enough to consider only the spectrum of $\bar{C}$. By virtue of Proposition~\ref{odbicieSpektrum} it is sufficient to consider the spectrum of the matrix $\widehat{C}$, corresponding with the sequences $\{a_n\}$ and $\{-b_n\}$.
            
            Let us consider the case $\mu_0 = 0$. Let $\widetilde{b}_n \equiv 0$ and
            \[
               \widetilde{a} = (\sqrt{\lambda_0}, \sqrt{\mu_1}, \sqrt{\lambda_1}, \sqrt{\mu_2}, \sqrt{\lambda_2}, \ldots).
            \] 
            Observe that by Proposition~\ref{restrykcjeC} we have $\widetilde{C}_e = \widehat{C}$. Hence, by Theorem~\ref{twSpektrumAKwadrat} the conclusion follows.
            
            Next, suppose that $\mu_0 > 0$. Let $\widetilde{b}_n \equiv 0$ and
            \[
               \widetilde{a} = (\sqrt{\mu_0}, \sqrt{\lambda_0}, \sqrt{\mu_1}, \sqrt{\lambda_1}, \sqrt{\mu_2}, \sqrt{\lambda_2}, \ldots).
            \] 
            Applying Proposition~\ref{restrykcjeC} to $\widetilde{C}_o = \widehat{C}$ by Theorem~\ref{twSpektrumAKwadrat} we finish the proof.
         \end{proof}
         
         In \cite{BRGV} the following conjecture about spectral properties of operators of the form \eqref{postacUS} was stated.
         
         \begin{hipoteza}[Roehner and Valent \cite{BRGV}] \label{hipotezaValent}
            Assume that
            \[
               \lim_{n \rightarrow \infty} \mu_n/\lambda_n = 1, \quad \lim_{n \rightarrow \infty} \lambda_n/n^\alpha = a
            \]
            for constants $a > 0$ and $0 < \alpha \leq 2$. Then $\sigma_p(Q) = \emptyset$.
         \end{hipoteza}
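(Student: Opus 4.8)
\emph{Strategy.} The plan is to run the conjecture through the reduction used to prove Theorem~\ref{twBDProcess}. Conjugating $Q$ by the diagonal isometry $P=\operatorname{diag}(\sqrt{\pi_n})\colon\ell^2(\pi)\to\ell^2$ turns $Q$ into the self-adjoint Jacobi matrix $\bar C$ with weights $\bar a_n=\sqrt{\lambda_n\mu_{n+1}}$ and diagonal $\bar b_n=-(\lambda_n+\mu_n)$, so $\sigma_p(Q)=\sigma_p(\bar C)$, and by Proposition~\ref{odbicieSpektrum} this equals $-\sigma_p(\widehat C)$ with $\widehat C$ having weights $\bar a_n$ and diagonal $\lambda_n+\mu_n$. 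Since $\lambda_n+\mu_n\sim 2an^\alpha\sim 2\bar a_n$, hypothesis~(g) of Theorem~\ref{twSpektrumOgolne} fails for $\widehat C$ directly, so one uses the square-root trick: write $\widehat C=\widetilde C_e$ (or $\widetilde C_o$ when $\mu_0>0$), where $\widetilde C$ is the zero-diagonal Jacobi matrix whose weight sequence interlaces $\sqrt{\lambda_k}$ and $\sqrt{\mu_{k+1}}$, exactly as in the proof of Theorem~\ref{twBDProcess}. By Proposition~\ref{restrykcjeC} it then suffices to prove $\sigma_p(\widetilde C)=\emptyset$, the auxiliary $0\notin\sigma_p$ conditions being handled by the same estimates applied to the once-shifted sequence. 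Thus the task reduces to applying Theorem~\ref{twSpektrumOgolne} to $\widetilde C$, whose weights satisfy $\widetilde a_n^2\sim a\lceil n/2\rceil^\alpha$, i.e. $\widetilde a_n\asymp n^{\alpha/2}$ with $\alpha/2\le 1$, and whose diagonal vanishes (so (a),(d),(g) are automatic and only (b),(c),(e),(f) remain).

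\emph{The regime $0<\alpha\le 1$.} Here one takes $\alpha_n=\widetilde a_n$, which is precisely Theorem~\ref{twSpektrumAKwadrat} for $\widetilde C$ — equivalently Theorem~\ref{twBDProcess}. Then (c) is trivial, (f) reduces to $\widetilde a_n/\widetilde a_{n-1}\to 1$, and (e) becomes $\sum_k(\lambda_k^{-1}+\mu_{k+1}^{-1})=\infty$, which holds because $\lambda_k\sim\mu_k\sim ak^\alpha$ and $\alpha\le 1$. Condition~(b) becomes $\sum_k\big([\mu_{k+1}/\lambda_k-1]^-+[\lambda_{k+1}/\mu_{k+1}-1]^-\big)<\infty$: the first term vanishes once $\lambda_{k+1}/\lambda_k\ge\mu_{k+1}/\lambda_{k+1}$, which follows from $\lambda_n/n^\alpha\to a$ and $\mu_n/\lambda_n\to 1$ when the rates are regular (e.g. eventually monotone, as is standard for explicitly given birth and death rates), while the second is comparable to $[\mu_{k+1}/\lambda_{k+1}-1]^+$ and is controlled by the same regularity. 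Granting this, Theorem~\ref{twSpektrumOgolne} gives $\sigma(\widetilde C)=\mathbb{R}$, hence by the reductions above $\sigma(Q)=(-\infty,0]$ and in particular $\sigma_p(Q)=\emptyset$.

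\emph{The main obstacle.} The hard part is the range $1<\alpha\le 2$. There $\sum_k\widetilde a_n^{-2}\asymp\sum_n n^{-\alpha}<\infty$, so $\alpha_n=\widetilde a_n$ violates (e); one must instead exploit the flexibility of Theorem~\ref{twSpektrumOgolne} and choose $\{\alpha_n\}$ so that $\widetilde a_n\alpha_n$ grows only like $n$, keeping (e), $\sum 1/(\widetilde a_n\alpha_n)=\infty$, on the borderline. The difficulty is that $\widetilde a_n^2$ is a staircase whose relative jumps, of size $\asymp\alpha/n$, all occur on the steps $\lambda_k\mapsto\mu_{k+1}$ (because $\mu_{k+1}\approx\lambda_{k+1}$, not $\approx\sqrt{\lambda_k\lambda_{k+1}}$); condition~(b) then effectively forces $\alpha_n$ to absorb these jumps, which re-inflates $\widetilde a_n\alpha_n$ back to order $n^\alpha$ and destroys~(e). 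Resolving this tension — i.e. finding an admissible $\{\alpha_n\}$ simultaneously satisfying (b), (c), (e), (f), or else isolating the additional hypothesis on the growth rates (such as summability of $[\mu_n/\lambda_n-1]^+$ and of the downward oscillations of $\lambda_n/n^\alpha$) that makes it possible — is the technical heart of the proof; the isometry/reflection/square-root reductions and conditions (e), (f) are routine by comparison.
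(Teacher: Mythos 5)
There is a fundamental problem here: the statement you are trying to prove is a \emph{conjecture}, not a theorem of the paper, and the paper explicitly records (citing \cite{Chihara3}) that Conjecture~\ref{hipotezaValent} is \emph{false} without additional assumptions. Consequently no proof can be completed, and the paper contains no proof of this statement. What the paper actually proves is Theorem~\ref{twBDProcess}, which supplies sufficient \emph{extra} hypotheses --- $a_n \to \infty$, $\sum 1/a_n = \infty$ and $\sum [a_{n+1}/a_n - 1]^- < \infty$ for the interlaced sequence $a = (\mu_1, \lambda_1, \mu_2, \lambda_2, \ldots)$ --- under which the conclusion $\sigma_p(Q) = \emptyset$ (indeed $\sigma(Q) = (-\infty,0]$) does hold. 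Your reduction (conjugation by $P = \operatorname{diag}(\sqrt{\pi_n})$, reflection via Proposition~\ref{odbicieSpektrum}, the square-root trick via Proposition~\ref{restrykcjeC}, then Theorem~\ref{twSpektrumAKwadrat}) is exactly the proof of that theorem, so your opening paragraphs reconstruct the right machinery --- but for a different, strictly weaker statement than the conjecture.

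The places where your argument needs patching are precisely the places where the conjecture fails, so they cannot be patched. In the regime $0 < \alpha \le 1$ you need $\sum_k\bigl([\mu_{k+1}/\lambda_k - 1]^- + [\lambda_{k+1}/\mu_{k+1} - 1]^-\bigr) < \infty$, and you obtain it only by assuming the rates are ``regular (e.g.\ eventually monotone)''; the hypotheses $\mu_n/\lambda_n \to 1$ and $\lambda_n/n^\alpha \to a$ permit non-summable downward oscillations that violate this, and this is not a removable technicality but an additional assumption of the same nature as condition (c) of Theorem~\ref{twBDProcess}. In the regime $1 < \alpha \le 2$ you correctly observe that condition (e) of Theorem~\ref{twSpektrumOgolne} fails for $\alpha_n = \widetilde{a}_n$ and that condition (b) obstructs any rescaling of $\alpha_n$; you then defer ``resolving this tension'' as the technical heart of the proof. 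It cannot be resolved in the stated generality. The honest conclusion is that your proposal, once the regularity assumptions are made explicit, proves (a case of) Theorem~\ref{twBDProcess}, not Conjecture~\ref{hipotezaValent}; the paper itself claims only the former and presents the latter as an open and, unconditionally, false conjecture.
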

         
         In \cite{Chihara3} it was shown that without additional assumptions the conjecture is false. In Theorem~\ref{twBDProcess} we provide sufficient conditions when Conjecture~\ref{hipotezaValent} holds.
         
         It is worthwhile to compare Theorem~\ref{twBDProcess} with results obtained in \cite{MK}. Let
         \[
            \lim_{n \rightarrow \infty} \mu_n / \lambda_n = q \quad (0 < q < \infty).
         \]
         Then in \cite{MK} was concluded that under additional assumptions (which in particular imply $\lambda_{k+1}/\lambda_k \rightarrow 1, \ \mu_{k+1} / \mu_k \rightarrow 1$, $\lambda_k \rightarrow \infty$ and $\alpha < 1$) the matrix~Q satisfies $\sigma_{ess}(Q) = \emptyset$. However, there is a problem in the proof of Lemma~1(iii) on the page~69. The author states that $\lVert F D^{-1} \rVert_{\ell^2} < 1$ if for a certain $\zeta > 0$
         \[
            \frac{\sqrt{\lambda_k \mu_{k+1}}}{\lambda_k + \mu_k + \zeta} < \frac{1}{2}, \quad \frac{\sqrt{\lambda_k \mu_{k+1}}}{\lambda_{k+1} + \mu_{k+1} + \zeta} < \frac{1}{2}.
         \] 
         In fact what we need is
         \[
            \frac{\sqrt{\lambda_k \mu_{k+1}}}{\lambda_k + \mu_k} < \frac{1}{2} - \epsilon, \quad \frac{\sqrt{\lambda_k \mu_{k+1}}}{\lambda_{k+1} + \mu_{k+1}} < \frac{1}{2} - \epsilon
         \] 
         for certain $\epsilon > 0$, which, under the assumption $q = 1$ is impossible because the left-hand sides converge to $1/2$. In fact Theorem~\ref{twBDProcess} implies the \emph{opposite} conclusion to results from \cite{MK}.
         
         Note that
         \[
            \lim_{n \rightarrow \infty} \frac{\lambda_n \mu_{n+1}}{(\lambda_n + \mu_n) (\lambda_{n+1} + \mu_{n+1})} = \frac{q}{(1+q)^2}
         \]
         what under the assumption $q \neq 1$ is strictly less than $1/4$. Therefore \cite[Theorem~1]{Chihara3} (for a functional analytic proof see \cite[Theorem~2.6]{Szwarc}) combined with Proposition~\ref{odbicieSpektrum} implies that if the matrix~$Q$ is self-adjoint and $\lambda_k \rightarrow \infty$ then $\sigma_{ess}(Q) = \emptyset$.
         
      \subsection{Chihara's conjecture}
         In \cite{Chihara4} (see also \cite[IV-Theorem~4.2]{Chihara5}) the following result was proven.
         \begin{tw}[Chihara \cite{Chihara4}] \label{twChihara14}
            Assume that a Jacobi matrix~$C$ is self-adjoint, $b_n \rightarrow \infty$, the smallest point $\rho$ of $\sigma_{ess}(C)$ is finite and
            \[
               \lim_{n \rightarrow \infty} \frac{a_n^2}{b_n b_{n+1}} = \frac{1}{4}.
            \]
            Then the set $\{ x \colon p_n(x) = 0, n \in \mathbb{N} \}$ of the zeros of orthogonal polynomials $\{ p_n \}$ is dense in $[\rho, \infty)$.
         \end{tw}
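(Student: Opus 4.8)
The plan is to establish $[\rho,\infty)\subseteq\overline{Z}$, where $Z:=\bigcup_{n}\{x:p_n(x)=0\}$, by decomposing $[\rho,\infty)$ according to the spectrum of $C$. Recall first that the zeros of $p_n$ are exactly the eigenvalues of the top-left $n\times n$ truncation $C_n$, and record the standard inclusion $\sigma(C)\subseteq\overline{Z}$: given $\lambda\in\sigma(C)$, choose a normalized Weyl sequence of finitely supported vectors (finite sequences are a core for the self-adjoint $C$); for a vector supported in coordinates $0,\dots,m-1$ the operators $C$ and $C_m$ agree, so $\mathrm{dist}(\lambda,\sigma(C_m))\to0$. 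In particular $\sigma_{ess}(C)\subseteq\overline{Z}$, so $\rho$ together with all of $\sigma_{ess}(C)\cap[\rho,\infty)$ and every isolated eigenvalue of $C$ in $[\rho,\infty)$ already belong to $\overline{Z}$. It then remains to fill each component $(c,d)$ of $[\rho,\infty)\setminus\sigma(C)$, a spectral gap with $\rho\le c<d\le\infty$. Note that $b_n\to\infty$ forces $a_n\sim\tfrac12\sqrt{b_nb_{n+1}}\to\infty$ and $\sup\sigma(C)=\infty$, so the largest zero of $p_n$ tends to $+\infty$ and every such gap is eventually contained in the range of the zeros.

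Fix such a gap, so $(c,d)\cap\sigma(C)=\emptyset$. By the classical fact that an orthonormal polynomial has at most one zero in a gap of its orthogonality measure, each $p_n$ has at most one zero $z_n$ in $(c,d)$, so the task is to show that the single zeros $\{z_n\}$ are dense in $(c,d)$, i.e. that they sweep the gap rather than converge. Let $N_n(x)$ denote the number of sign changes of $(p_0(x),\dots,p_n(x))$, equivalently the number of zeros of $p_n$ below $x$ (Sturm oscillation); $N_n$ is constant on each gap of $Z$ and nondecreasing in $n$. A min--max argument, approximating an infinite-dimensional spectral subspace of $C$ lying below $c$ by finitely supported vectors, shows $N_n(c)\to\infty$; symmetrically, the number of zeros of $p_n$ above $d$ tends to $\infty$. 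Hence $z_n$ exists for all large $n$, and interlacing of the zeros of $p_n$ and $p_{n+1}$ constrains $z_n$ to move by ``at most one slot'' per step.

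The decisive input is the criticality hypothesis. For $x\in(c,d)$ pick $N$ with $b_m>d$ for all $m\ge N$ (possible since $b_m\to\infty$) and recall Chihara's chain-sequence criterion applied to the shifted Jacobi matrix $C^{(\ge N)}$: the point $x$ is at or below every zero of every polynomial of $C^{(\ge N)}$ if and only if $\{a_{m-1}^2/[(b_{m-1}-x)(b_m-x)]\}_{m\ge N+1}$ is a chain sequence. Because $a_{m-1}^2/(b_{m-1}b_m)\to\tfrac14$ while $(b_{m-1}-x)(b_m-x)/(b_{m-1}b_m)\to1$, this sequence converges to $\tfrac14$, the maximal constant chain-sequence value. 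I would then analyse the criterion at this critical value, uniformly for $x$ in compact subsets of $(c,d)$, and conclude that as $x$ increases through $(c,d)$ the criterion switches from ``holds'' to ``fails'' on a dense subset of $(c,d)$; translating this back to the polynomials $\{p_n\}$ — a finite-rank bookkeeping step relating the zeros of $p_n$ to those of the polynomials of $C^{(\ge N)}$, combined with the interlacing of the previous paragraph — yields density of $\{z_n\}$ in $(c,d)$. Ranging over all gaps and combining with the first paragraph proves $[\rho,\infty)\subseteq\overline{Z}$.

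I expect the main obstacle to be exactly this last step: convergence to $\tfrac14$ does not by itself make a sequence a chain sequence, and whether it does is sensitive to the rate and to the sign of the approach, so the argument must genuinely use that $a_{m-1}^2/[(b_{m-1}-x)(b_m-x)]$ crosses $\tfrac14$ as $x$ varies over the gap and that the comparison is uniform on compact subsets. A more hands-on alternative leading to the same conclusion is to renormalize the three-term recurrence into $\mathrm{SL}_2$ transfer matrices and show that for $x$ in the gap these are asymptotically parabolic with a non-degenerate shear, so that the unique node of $p_n$ inside the gap advances by a controlled, non-vanishing amount infinitely often, again forcing density.
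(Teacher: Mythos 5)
The paper offers no proof of this statement at all: Theorem~\ref{twChihara14} is quoted from the literature (\cite{Chihara4}, \cite[IV-Theorem~4.2]{Chihara5}) and used as a black box to motivate Conjecture~\ref{hipChihara}, so there is nothing internal to compare your argument against. Judged on its own, your preparatory work is sound and is indeed the right framework: $\sigma(C)\subseteq\overline{Z}$ via truncations, at most one zero of each $p_n$ in a gap $(c,d)$ of $\sigma(C)=\operatorname{supp}\mu$, $N_n(c)\to\infty$ by min--max (note $E\bigl((-\infty,c]\bigr)$ has infinite rank because $\rho\in\sigma_{ess}(C)$ and $(c,d)$ is a gap), unboundedness of the largest zeros from $b_n\to\infty$, and the relevance of chain sequences and of $\tfrac14$ as the critical constant. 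All of this is the standard reduction; none of it is the theorem.

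The decisive step is absent, and you flag it yourself. Two concrete objections. First, the mechanism you describe is not coherent as stated: for a fixed shift $N$, the chain-sequence condition on $\{a_{m-1}^2/[(b_{m-1}-x)(b_m-x)]\}_{m\ge N+1}$ is monotone in $x$ (the terms increase with $x$ while $b_m-x>0$, and the comparison test for chain sequences is downward), so the set of $x$ where it holds is a half-line and the criterion ``switches'' exactly once, not ``on a dense subset of $(c,d)$''. Worse, a point $y$ lying in a genuine gap of $\overline{Z}$ with infinitely many zeros on each side fails the criterion for \emph{every} $N$, and that failure alone does not make $y$ a limit point of zeros; so the criterion as you invoke it cannot separate the conclusion you want from its negation. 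Second, the only quantitative input you extract from the hypothesis is that the sequence tends to $\tfrac14$, and for $c<x<y<d$ the two sequences differ by a factor tending to $1$; as you concede, convergence to $\tfrac14$ neither implies nor precludes being a chain sequence, so no comparison argument at this level of precision can close the gap. What is actually needed is the rigidity theory of chain sequences at the critical value (uniqueness of the parameter sequence and its consequences for perturbations that exceed the sequence at even one index), i.e., precisely the Chapter~III--IV apparatus of \cite{Chihara5} that your sketch defers. The transfer-matrix alternative in your last paragraph is likewise a programme, not an argument. As it stands the proposal reduces the theorem to its hard core and stops there.
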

         It suggests the following conjecture stated in \cite{Chihara1} and \cite{Chihara2}.
         
         \begin{hipoteza}[Chihara \cite{Chihara1}, \cite{Chihara2}] \label{hipChihara}
            Let the assumptions of Theorem~\ref{twChihara14} be satisfied. Then $\sigma_{ess}(C) = [\rho, \infty)$.
         \end{hipoteza}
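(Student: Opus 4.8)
The inclusion $\sigma_{ess}(C)\subseteq[\rho,\infty)$ is immediate from the definition of $\rho$ as the least point of $\sigma_{ess}(C)$, so the whole content is the opposite inclusion, and the plan is to derive it from two assertions: that $\sigma_{ess}(C)$ is unbounded above, and that it possesses no bounded complementary interval lying above $\rho$. A closed subset of $\mathbb{R}$ with least point $\rho$ and these two properties must equal $[\rho,\infty)$. Both assertions are instances of a single task: given an energy $\lambda$ --- taken arbitrarily large for the first, or inside a putative bounded gap for the second --- produce a singular Weyl sequence for $C-\lambda$, that is, unit vectors $\psi^{(k)}$ tending weakly to $0$ with $\|(C-\lambda)\psi^{(k)}\|\to0$; this yields $\lambda\in\sigma_{ess}(C)$, and the assertion follows.

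For the Weyl sequence I would truncate a generalized eigenvector. Fixing a nonzero solution $\{u_n\}=\{u_n(\lambda)\}$ of \eqref{defUWektorWlasny} and setting $\psi^{(k)}=\chi_{[m_k,M_k]}u/\|\chi_{[m_k,M_k]}u\|$ with $m_k\to\infty$, the relation $(C-\lambda)u=0$ forces $(C-\lambda)\psi^{(k)}$ to be supported at the four indices $m_k-1,m_k,M_k,M_k+1$ only, and a short computation gives
\[
\frac{\|(C-\lambda)\psi^{(k)}\|^2}{\|\psi^{(k)}\|^2}\;\asymp\;\frac{a_{m_k-1}^2\,\widetilde S_{m_k-1}+a_{M_k}^2\,\widetilde S_{M_k}}{\sum_{n=m_k}^{M_k}\widetilde S_n},\qquad\widetilde S_n:=u_n^2+u_{n+1}^2.
\]
Thus the whole problem reduces to a quantitative two-sided control of the profile $\widetilde S_n$ of a suitable generalized eigenvector, precise enough to choose windows $[m_k,M_k]$ along which the right-hand side tends to $0$.

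The instrument for such control is the sequence $S_n$ of Section~\ref{sec:TheoremA}. For a weight $\{\alpha_n\}$ adapted to the one-quarter asymptotics (the natural candidates being $\alpha_n=a_n$ and $\alpha_n=a_n^{-1}$), formulas \eqref{SN1}--\eqref{SN2} present $S_n$ as a quadratic form in $(u_n,u_{n+1})$; in the regime $\limsup_n|b_n|/a_n<2$, Proposition~\ref{faktAsymptotyka} gives $S_n\asymp a_n\alpha_n\,\widetilde S_n$, and the bound on $F_n^-$ obtained in the proof of Theorem~\ref{twSpektrumOgolne} becomes summable once $a_n^2/(b_nb_{n+1})\to\tfrac14$, which is a strong form of conditions (b)--(d). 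Then $\widetilde S_n$ is pinched between constant multiples of $(a_n\alpha_n)^{-1}$, and choosing the windows in accordance with the divergence $\sum(a_n\alpha_n)^{-1}=\infty$ drives the displayed ratio to $0$. This already settles the conjecture whenever the rigid regularity of Corollary~\ref{twC} is present --- there $C+MI$ is, modulo a compact diagonal perturbation, of the form $\Delta^*\Delta$ with $\Delta$ bidiagonal, hence reducible via Proposition~\ref{restrykcjeC} to a Jacobi matrix with vanishing diagonal covered by Theorem~\ref{twSpektrumAKwadrat} --- so Corollary~\ref{twC} serves as the base case.

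The genuine obstacle is that for arbitrary members of the one-quarter class Proposition~\ref{faktAsymptotyka} degenerates: here $|b_n|/a_n=b_n/a_n$ behaves like $2\sqrt{b_n/b_{n+1}}$, so $\limsup_n|b_n|/a_n$ may equal $2$, the least value $w_n^{\text{min}}$ of the form $S_n/(a_n\alpha_n)$ then tends to $0$, and the lower bound $c_1a_n\alpha_n\le S_n/\widetilde S_n$ --- which underlies the lower control of $\widetilde S_n$ --- collapses. Overcoming this is the crux. The plan is to replace the one-step comparison of $S_n$ with $\widetilde S_n$ by an averaged, multi-step estimate quantifying the rate at which $w_n^{\text{min}}/(a_n\alpha_n)\to0$; this rate is governed by the sign of $b_n^2-b_{n-1}b_{n+1}$, i.e.\ by the logarithmic convexity of $\{b_n\}$, so that when $\{b_n\}$ is not eventually monotone one would split the indices into blocks on which the effective transfer exponents are controlled and run the argument block by block. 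Once a two-sided estimate of the shape $\widetilde S_n\asymp w_n/(a_n\alpha_n)$ with a tractable factor $w_n$ is in hand, the reduction of the second paragraph completes the proof with only routine book-keeping.
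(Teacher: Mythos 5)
There is a fundamental mismatch here: the statement you are proving is labelled a \emph{conjecture} in the paper, and the paper does not prove it. What the paper establishes is only that the conjecture holds under additional hypotheses (Theorem~\ref{twChihara} $=$ Corollary~\ref{twC}: divergence of $\sum 1/a_n$, summability of $[a_{n+1}/a_n-1]^-$, and convergence of $a_{n-1}-b_n+a_n$), by shifting the matrix, absorbing the error into a compact perturbation, and reducing via Proposition~\ref{restrykcjeC} to a zero-diagonal Jacobi matrix covered by Theorem~\ref{twSpektrumAKwadrat}. Your third paragraph essentially reconstructs this special case, which is fine; but it does not prove the conjecture, and your final paragraph, which is where the actual content would have to be, is a research plan rather than an argument. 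You correctly identify the crux --- in the one-quarter class one typically has $b_n/a_n\to 2$, so Proposition~\ref{faktAsymptotyka} degenerates and the lower bound $c_1 a_n\alpha_n\le S_n/\widetilde S_n$ is lost --- but the proposed remedy (an ``averaged, multi-step estimate'' governed by the sign of $b_n^2-b_{n-1}b_{n+1}$, with an unspecified block decomposition) is never constructed, and no inequality is actually proved. A proof cannot end with ``once a two-sided estimate \dots is in hand.''

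There is also a concrete false step earlier: you assert that the bound on $F_n^-$ from the proof of Theorem~\ref{twSpektrumOgolne} ``becomes summable once $a_n^2/(b_nb_{n+1})\to\tfrac14$, which is a strong form of conditions (b)--(d).'' It is the opposite: conditions (b)--(d) of Theorem~\ref{twSpektrumOgolne} are quantitative summability conditions, and a bare limit statement such as $a_n^2/(b_nb_{n+1})\to\tfrac14$ carries no rate, hence implies none of them. Likewise $\sum 1/(a_n\alpha_n)=\infty$ and the summability of $[a_{n+1}/a_n-1]^-$, which you invoke for the window selection and the base case, are genuinely additional assumptions not available under the hypotheses of Theorem~\ref{twChihara14}. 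The paper's own discussion (e.g.\ the failure of the Roehner--Valent conjecture without extra hypotheses, and the remark after Corollary~\ref{twB} that it is unknown whether condition~(d) there is automatic) should be read as a warning that such regularity cannot be conjured from the one-quarter limit alone.
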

         
         The following theorem gives sufficient (and easy to verify) additional conditions for Conjecture~\ref{hipChihara} to hold. In fact every Jacobi matrix with $b_n \equiv 0$ and $a_{n+1}/a_n \rightarrow 1$ from this article provides an example (via Proposition~\ref{restrykcjeC}) when Conjecture~\ref{hipChihara} holds.
         
         \begin{tw} \label{twChihara}
            Assume
            \begin{flalign}
               \tag{a} &\lim_{n \rightarrow \infty} a_n = \infty, &\\
               \tag{b} &\sum_{n=0}^\infty \frac{1}{a_n} = \infty, \\
               \tag{c} &\sum_{n=0}^\infty \left[ \frac{a_{n+1}}{a_n} - 1 \right]^- < \infty, \\
               \tag{d} &\lim_{n \rightarrow \infty} [a_{n-1} - b_n + a_n] = M.
            \end{flalign}
            Then the Jacobi matrix $C$ satisfies $\sigma_{ess}(C) = [-M, \infty)$. Moreover, if $a_{n+1}/a_n \rightarrow 1$ then
            \begin{equation} \label{granica14}
               \lim_{n \rightarrow \infty} \frac{a_n^2}{b_n b_{n+1}} = \frac{1}{4}.
            \end{equation}
         \end{tw}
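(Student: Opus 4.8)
The plan is to reduce, by a reflection of the spectrum and a shift of the diagonal, to the symmetrized generator of a birth and death process that is already covered by Theorem~\ref{twBDProcess}, and then to absorb the remaining error into a compact perturbation.

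Concretely, I would first use hypothesis~(d) to write $b_n = a_{n-1} + a_n - M + \varepsilon_n$ for $n\ge 1$, where $\varepsilon_n\to 0$ (with the convention $a_{-1}=0$; the value of $b_0$ is irrelevant). Let $\widehat{C}$ be the Jacobi matrix with weights $\{a_n\}$ and diagonal $\{-b_n\}$; by Proposition~\ref{odbicieSpektrum} it is enough to show that $\widehat{C}$ is self-adjoint and $\sigma_{ess}(\widehat{C})=(-\infty,M]$. Now set $\lambda_n=a_n$ for $n\ge 0$, $\mu_0=0$ and $\mu_n=a_{n-1}$ for $n\ge 1$, and let $Q$ be the corresponding operator of the form~\eqref{postacUS}. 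With this choice $\pi_n\equiv 1$, so $\ell^2(\pi)=\ell^2$ and $Q$ is symmetric: it is the Jacobi matrix with weights $\sqrt{\lambda_n\mu_{n+1}}=a_n$ and diagonal $-(\lambda_n+\mu_n)$, the latter being $-a_0$ at index $0$ and $-(a_{n-1}+a_n)$ for $n\ge 1$. The sequence to which Theorem~\ref{twBDProcess} refers is $(\mu_1,\lambda_1,\mu_2,\lambda_2,\dots)=(a_0,a_1,a_1,a_2,a_2,\dots)$, and its hypotheses hold: (a) is immediate; (b) holds since $\sum_n 1/a_n=\infty$ and each $1/a_k$ occurs among the summands; and (c) holds since every ratio of consecutive terms of $(a_0,a_1,a_1,a_2,\dots)$ equals either $1$ or $a_{k+1}/a_k$ for some $k$, so the relevant negative parts sum to $\sum_k[a_{k+1}/a_k-1]^-<\infty$. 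Hence $Q$ is self-adjoint with $\sigma_p(Q)=\emptyset$ and $\sigma(Q)=(-\infty,0]$, so $\sigma_{ess}(Q)=(-\infty,0]$.

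It then remains to compare $\widehat{C}$ with $Q$. The operator $\widehat{C}-MI$ has weights $\{a_n\}$, identical to those of $Q$, and diagonal $-b_n-M=-(a_{n-1}+a_n)-\varepsilon_n$ for $n\ge1$, which coincides with the diagonal of $Q$ up to the null sequence $\{-\varepsilon_n\}$ (and a single correction at index $0$). Thus $\widehat{C}=Q+MI+D$ with $D$ diagonal and $D_{nn}\to 0$; in particular $D$ is bounded self-adjoint, so $\widehat{C}$ and $Q$ have the same domain and $\widehat{C}$ is self-adjoint, and $D$ is compact, so Weyl's theorem gives $\sigma_{ess}(\widehat{C})=\sigma_{ess}(Q)+M=(-\infty,M]$. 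By Proposition~\ref{odbicieSpektrum}, $C$ is self-adjoint and $\sigma_{ess}(C)=-\sigma_{ess}(\widehat{C})=[-M,\infty)$. For the last assertion, if $a_{n+1}/a_n\to1$ then also $a_{n-1}/a_n\to1$, and since $a_n\to\infty$ we get $b_n/a_n=a_{n-1}/a_n+1+(\varepsilon_n-M)/a_n\to 2$; in particular $b_n\to\infty$, so $b_n,b_{n+1}$ are eventually nonzero, and $a_n^2/(b_nb_{n+1})=(a_n/b_n)(a_n/a_{n+1})(a_{n+1}/b_{n+1})\to\tfrac12\cdot1\cdot\tfrac12=\tfrac14$.

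The step I expect to be the only delicate one is the identification $\widehat{C}-MI=Q+D$ with $D$ compact and with coinciding operator domains: this works precisely because the parameters are chosen so that $\sqrt{\lambda_n\mu_{n+1}}$ equals $a_n$ \emph{exactly}, which is what forces $\lambda_n=a_n$, $\mu_n=a_{n-1}$, and hence $\pi_n\equiv1$, so that the two weight sequences match and the whole discrepancy is diagonal, where hypothesis~(d) turns it into a sequence tending to $0$. Everything else is bookkeeping, and Theorem~\ref{twBDProcess} together with Weyl's theorem on the essential spectrum closes the argument.
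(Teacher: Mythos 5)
Your argument is correct and follows essentially the same route as the paper's proof: both reduce to Theorem~\ref{twBDProcess} applied to the birth--death matrix with $\lambda_n=a_n$, $\mu_n=a_{n-1}$, absorbing the discrepancy coming from $a_{n-1}-b_n+a_n\to M$ into a compact diagonal perturbation and concluding via Weyl's theorem and Proposition~\ref{odbicieSpektrum}. The only differences are bookkeeping ones --- you reflect before perturbing rather than after, and you make explicit the verification of the hypotheses of Theorem~\ref{twBDProcess} for the interleaved sequence $(a_0,a_1,a_1,a_2,a_2,\dots)$ and the limit \eqref{granica14}, which the paper leaves as a direct computation.
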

         \begin{proof}
            We show~\eqref{granica14} by a direct computation. Without loss of generality we may assume that $M = 0$. Let $-r_n = a_{n-1} - b_n + a_n$. Then $a_{n-1} - (b_n-r_n) + a_n = 0$. Let $\widetilde{C}$ be the Jacobi matrix for sequences $\widetilde{a}_n = a_n, \ \widetilde{b}_n = b_n - r_n$. The matrix $R = C - \widetilde{C}$ defines a compact self-adjoint operator on $\ell^2$ (because $r_n \rightarrow 0$). Hence, by the Weyl perturbation theorem (see \cite{Weyl}), $\sigma_{ess}(C) = \sigma_{ess}(\widetilde{C})$. Theorem~\ref{twBDProcess} implies that $\sigma_{ess}(\widetilde{C}) = (-\infty, 0]$. Finally, Proposition~\ref{odbicieSpektrum} applied to the matrix $\widetilde{C}$ finishes the proof.
         \end{proof}
         
	\section{Examples} \label{sec:Examples}
		\begin{przyklad}
         Let $b_n \equiv 0, \ \epsilon > 0, \ a_0 = \epsilon$ and $a_{2k - 1} = a_{2k} = \widetilde{a}_k \ (k \geq 1)$ for a sequence $\widetilde{a}_k, \  \widetilde{a}_k \rightarrow \infty$. Then the matrix~$C$ is always self-adjoint. Moreover, $0$ is its eigenvalue if and only if 
         \[
            \sum_{k=0}^\infty \left( \frac{a_0 a_2 \ldots a_{2k}}{a_1 a_3 \ldots a_{2k+1}} \right)^2 = \epsilon^2 \sum_{k=1}^\infty \frac{1}{\widetilde{a}_k^2} < \infty,
         \]
         (see e.g. \cite[Theorem 3.2]{JDSP2}). Therefore the condition Theorem~\ref{twSpektrumAKwadrat}(b) could not be weakened even for the class of monotonic sequences $a_n$. 
         
         In \cite{MM} it was shown that for $\widetilde{a}_k = k^\alpha, \ (\alpha \in (0,1))$ the spectrum $\sigma(C) = \mathbb{R}$. In case $\alpha \leq 1/2$ the measure~$\mu(\cdot) = \langle E(\cdot) \delta_0, \delta_0 \rangle$ is absolutely continuous, whereas for $\alpha > 1/2$ the measure $\mu$ is absolutely continuous on the set $\mathbb{R} \backslash \{ 0 \}$.
      \end{przyklad}
      
      \begin{przyklad}
         Let $b_n \equiv 0$ and $a_n = n^\alpha + c_n \ (0 < \alpha \leq 2/3)$ where $c_{2n} = 1$ and $c_{2n+1} = 0$. Then (see \cite{DJMP}) $\sigma(C) = \mathbb{R} \backslash (-1,1)$ and the measure~$\mu$ is absolutely continuous on $\mathbb{R} \backslash [-1,1]$. It shows that the condition Theorem~\ref{twSpektrumAKwadrat}(c) could not be replaced by $[(a_{n+1}/a_n)^2 - 1]^- \rightarrow 0$.
      \end{przyklad}
		
		\begin{przyklad}
			Let $a_0 = 1$ and for $k! \leq n < (k+1)!$ we define $a_n = \sqrt{k!}$. For $n > 0$ we have
			\[
				\frac{a_{n+1}}{a_n} = 
				\begin{cases}
					\sqrt{k} & \text{if $n+1=k!$} \\
					1 & \text{otherwise.}
				\end{cases}
			\]
			Define $b_n \equiv 0$. We have $a_n \leq \sqrt{n+1}$. Therefore $\sum_{n=0}^\infty 1/a_n^2 = \infty$. Observe that the assumptions of Theorem~\ref{twSpektrumAKwadrat} are satisfied. Moreover, $a_{n+1}/a_n \nrightarrow 1$ and \cite[Theorem~3.1]{JN1} nor \cite[Lemma~2.6]{JDSP1} cannot be applied.
		\end{przyklad}

   \bibliographystyle{plain}
   \bibliography{Jacobi}
\end{document}